\newtheorem {Theorem}  {Theorem}
\newtheorem {Conjecture} {Conjecture}
\newtheorem {Problem} {Problem}
\begin{document}
\baselineskip = 15pt
\bibliographystyle{plain}

\title{Undecidability of Translational Tiling of the $4$-dimensional Space\\ with a Set of $4$ Polyhypercubes}
\date{}
\author{Chao Yang\\ 
              School of Mathematics and Statistics\\
              Guangdong University of Foreign Studies, Guangzhou, 510006, China\\
              sokoban2007@163.com, yangchao@gdufs.edu.cn\\
              \\
        Zhujun Zhang\\
              Big Data Center of Fengxian District, Shanghai, 201499, China\\
              zhangzhujun1988@163.com
              }
\maketitle

\begin{abstract}
Recently, Greenfeld and Tao disprove the conjecture that translational tilings of a single tile can always be periodic [Ann. Math. 200(2024), 301-363]. In another paper [to appear in J. Eur. Math. Soc.], they also show that if the dimension $n$ is part of the input, the translational tiling for subsets of $\mathbb{Z}^n$ with one tile is undecidable. These two results are very strong pieces of evidence for the conjecture that translational tiling of $\mathbb{Z}^n$ with a monotile is undecidable, for some fixed $n$. This paper shows that translational tiling of the $3$-dimensional space with a set of $5$ polycubes is undecidable. By introducing a technique that lifts a set of polycubes and its tiling from $3$-dimensional space to $4$-dimensional space, we manage to show that translational tiling of the $4$-dimensional space with a set of $4$ tiles is undecidable. This is a step towards the attempt to settle the conjecture of the undecidability of translational tiling of the $n$-dimensional space with a monotile, for some fixed $n$.
\end{abstract}

\noindent{\textbf{Keywords}}:
tiling, translation, $3$-dimension, $4$-dimension, undecidability\\
MSC2020: 52C22, 68Q17

\section{Introduction} 

The decidability or undecidability of the following translational tiling problem has received extensive studies recently \cite{gt21,gt23,gt24a,gt24b,yang23,yang23b,yz24,yz24b}. 

\begin{Problem}[Translational tiling of $\mathbb{Z}^n$ with a set of $k$ tiles] \label{pro_main}
A tile is a finite subset of $\mathbb{Z}^n$. Let $k$ and $n$ be fixed positive integers. Given a set $S$ of $k$ tiles in $\mathbb{Z}^n$, is there an algorithm to decide whether $\mathbb{Z}^n$ can be tiled by translated copies of tiles in $S$?
\end{Problem}

For dimension one ($n=1$), it is shown that for any $k$, if a set of $k$ tiles can tile $\mathbb{Z}$, then it can always tile $\mathbb{Z}$ periodically \cite{s93}, even though the periods can be exponentially long \cite{kolo03, st05}. As a consequence, the problem is decidable in dimension one. For $n=2$ and $k=1$, the periodicity and decidability are shown in \cite{bn91,b20, w15}. Structures of translational tilings of $\mathbb{Z}^n$ are studied in \cite{gt21}. Researchers have long believed the translational tiling of $\mathbb{Z}^n$ must also be decidable for one tile (i.e. $k$=1) by posting the following conjecture.

\begin{Conjecture}[Periodic Tiling Conjecture, \cite{gs16,lw96,s74}]
Let $T$ be a tile in $\mathbb{Z}^n$. If $T$ tiles $\mathbb{Z}^n$ with translated copies, then it can tile $\mathbb{Z}^n$ periodically with translated copies.
\end{Conjecture}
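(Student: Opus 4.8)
Before outlining a strategy, I must flag an honest obstruction: as the abstract records, Greenfeld and Tao have produced, for some large but fixed $n$, a tile $T\subset\mathbb{Z}^n$ that tiles $\mathbb{Z}^n$ by translations but admits no periodic tiling, so the conjecture \emph{as stated is false}. Any ``proof'' can therefore only target the regime where it is actually a theorem, namely $n=1$ and $n=2$; I sketch those arguments and then point to the exact place where the method breaks down.

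For $n=1$, given $T\subseteq[0,N)$ and a translation set $A$ with $T\oplus A=\mathbb{Z}$, the plan is to scan $\mathbb{Z}$ from left to right and record, at each position $m$, the ``overhang'' $s(m)\subseteq\{0,\dots,N-1\}$ of cells in $\{m,\dots,m+N-1\}$ already covered by translates of $T$ anchored strictly before $m$. A valid tiling forces the transition $s(m)\mapsto s(m+1)$ to be deterministic --- a translate is anchored at $m$ precisely when cell $m$ is not yet covered --- so the bi-infinite state sequence lives in a finite set under a deterministic shift, and a standard pigeonhole-plus-compactness argument then produces a periodic tiling, whence decidability in dimension one. For $n=2$, I would pass to indicator functions, rewrite $T\oplus A=\mathbb{Z}^2$ as $1_T * 1_A\equiv 1$ on $\mathbb{Z}^2$, and take Fourier transforms on the torus $\mathbb{T}^2$: since $T$ is finite, $\widehat{1_T}$ is a trigonometric polynomial, and the spectrum of the measure $1_A$ must lie in $\{\widehat{1_T}=0\}\cup\{0\}$. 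Combining the rigidity of this zero set in two dimensions (its portion relevant to a tiling complement is a finite union of cosets of one-dimensional subgroups) with a dilation lemma --- if $T\oplus A=\mathbb{Z}^2$ then a suitable arithmetic dilate $T^{(r)}$ still tiles with $A$ --- forces $A$ to be, up to a controlled lower-complexity correction, a finite union of cosets of a finite-index subgroup, and such structure is enough to extract a fully periodic tiling of $\mathbb{Z}^2$.

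The hard part, and the one reason the argument is confined to $n\le 2$, is precisely this structural step: controlling the geometry of the tiling set $A$, equivalently the zero set of $\widehat{1_T}$ and how a tiling complement can sit inside it. In $\mathbb{Z}^n$ with $n$ large this zero set is a general algebraic variety, the ``finite union of cosets'' dichotomy fails, the complement can be irreducibly aperiodic, and the conclusion collapses --- exactly the behaviour the Greenfeld--Tao counterexample realizes, and exactly the behaviour whose algorithmic counterpart, undecidability, the remainder of this paper sets out to establish.
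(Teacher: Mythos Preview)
Your instinct to refuse the task is exactly right, and in fact it matches the paper perfectly: the paper does not prove this statement at all. It is stated as a \emph{conjecture}, attributed to \cite{gs16,lw96,s74}, and immediately afterwards the paper records that ``the periodic tiling conjecture is disproved by Greenfeld and Tao \cite{gt24a}''. There is no proof in the paper to compare your proposal against; the conjecture appears only as historical motivation for why monotile translational tiling might be undecidable.

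Your additional sketches for $n=1$ and $n=2$ go well beyond anything the paper attempts. They are broadly faithful to the known literature --- the finite-automaton/pigeonhole argument for $\mathbb{Z}$ is standard (and is what underlies the Sidorenko reference \cite{s93} the paper cites), and the Fourier/dilation outline for $\mathbb{Z}^2$ is a reasonable high-level summary of Bhattacharya's approach \cite{b20} --- but the paper itself merely cites \cite{bn91,b20,w15} for the two-dimensional case without reproducing any argument. One small caution: in your $n=1$ sketch, the transition $s(m)\mapsto s(m+1)$ need not be literally deterministic when $0\notin T$ or when several anchor positions could cover the same leftmost uncovered cell; what you actually use is that the set of valid bi-infinite state sequences is a sofic subshift over a finite alphabet, which is enough for the pigeonhole/compactness conclusion.
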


The periodic tiling conjecture implies the decidability of translational tiling with one tile. However, the periodic tiling conjecture is disproved by Greenfeld and Tao \cite{gt24a}, which suggests that the translational tiling problem might be undecidable even with just one tile. Greenfeld and Tao also show that if the dimension $n$ is part of the input, the translational tiling for subsets of $\mathbb{Z}^n$ with one tile is undecidable \cite{gt24b}. The undecidability of the translational tiling problem of the entire space $\mathbb{Z}^n$ for some fixed $n$ with one tile ($k=1$) is still open.

The first undecidability result on tiling is Berger's proof of undecidability of Wang's domino problem.  A \textit{Wang tile} is a unit square with each edge assigned a color. Given a finite set of Wang tiles (see Figure \ref{fig_w3} for an example), Wang considered the problem of tiling the entire plane with translated copies from the set, under the conditions that the tiles must be edge-to-edge and the color of common edges of any two adjacent Wang tiles must be the same \cite{wang61}. This is known as \textit{Wang's domino problem}. Berger showed that Wang's domino problem is undecidable in general (i.e. the size of the set of Wang tiles can be arbitrarily large) in the 1960s.

\begin{Theorem}[\cite{b66}]\label{thm_berger}
    Wang's domino problem is undecidable.
\end{Theorem}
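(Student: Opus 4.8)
The plan is to prove undecidability by reducing the halting problem to Wang's domino problem. Given a Turing machine $M$, I would describe an effective procedure that outputs a finite set $\tau_M$ of Wang tiles with the property that $\tau_M$ admits a valid tiling of the plane if and only if $M$ never halts when started on the blank tape. Since the map $M \mapsto \tau_M$ is computable and the non-halting property is classically undecidable, the undecidability of the domino problem follows at once. The only real work is the construction of $\tau_M$ and the verification of the stated equivalence.

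The heart of the construction is to let Wang tiles simulate the space--time diagram of $M$. A horizontal row of tiles is read as an encoding of a tape configuration together with the head position and current state; the colour-matching constraints between consecutive rows are engineered so that they can be satisfied exactly when the upper row encodes the configuration obtained from the lower one by one application of $M$'s transition function. A constant number of "signal" tiles (the count depending only on the tape alphabet and state set of $M$) carry the symbol in each cell upward and implement the local rewriting near the head, and one deliberately omits any tile type encoding a halting instruction, so that no valid tiling can ever contain a halting configuration.

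The main obstacle, and the genuinely hard part, is initialization: the naive simulation above is satisfied trivially by rows of meaningless "garbage," since nothing forces a computation to begin from the blank initial configuration. To repair this I would build, as a separate module, an aperiodic Wang tile set enforcing a self-similar hierarchical structure: every tiling of the plane is forced to decompose into nested square blocks of side $2^{1}, 2^{2}, 2^{3}, \dots$, with each block of side $2^{k}$ containing a clean computation zone whose bottom row is forced to encode the blank tape in the initial state and which has the height and width to host $2^{k}$ simulated steps of $M$ (Robinson's square-in-square bookkeeping, or an equivalent device, realizes this with finitely many tiles). The decisive combinatorial lemma to establish first is that this hierarchical set itself admits a tiling and that every one of its tilings exhibits the nested-square structure; this is the step demanding the most care. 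Granting it, the equivalence is routine: if $M$ halts within $t$ steps then every computation zone of side at least $t$ is unfillable, so no tiling of the plane exists; if $M$ runs forever then all modules can be satisfied simultaneously and a valid tiling is assembled layer by layer. Combining the two directions with the computability of the construction yields the theorem.
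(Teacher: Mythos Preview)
Your sketch is a reasonable outline of the classical Berger--Robinson argument: reduce non-halting to tiling by having rows encode Turing-machine configurations, and force initialization via an aperiodic hierarchical tile set whose every tiling exhibits nested computation zones of unbounded size. The key ideas (simulation tiles, omission of halting tiles, and the self-similar scaffolding that enforces a blank-tape start at arbitrarily large scales) are all present, and the logical structure of the reduction is correct.

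That said, there is nothing in the paper to compare your proposal against. Theorem~\ref{thm_berger} is not proved in this paper at all; it is simply quoted from Berger's 1966 memoir as a black box and then used as the source problem for the reductions in Theorems~\ref{thm_3d_new} and~\ref{thm_main}. So while your write-up is a fair high-level summary of Berger's proof (with the Robinson-style hierarchical aperiodic set replacing Berger's much larger construction), the paper itself offers no proof of this statement for you to match or diverge from.
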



\begin{figure}[ht]
\begin{center}
\begin{tikzpicture}[scale=0.5]

\draw [ fill=green!50] (0,0)--(2,2)--(2,0)--(0,0);
\draw [ fill=red!50] (0,0)--(2,-2)--(2,0)--(0,0);
\draw [ fill=red!50] (4,0)--(2,2)--(2,0)--(4,0);
\draw [ fill=yellow!50] (4,0)--(2,-2)--(2,0)--(4,0);

\draw [ fill=yellow!50] (6+0,0)--(6+2,2)--(6+2,0)--(6+0,0);
\draw [ fill=blue!50] (6+0,0)--(6+2,-2)--(6+2,0)--(6+0,0);
\draw [ fill=blue!50] (6+4,0)--(6+2,2)--(6+2,0)--(6+4,0);
\draw [ fill=red!50] (6+4,0)--(6+2,-2)--(6+2,0)--(6+4,0);

\draw [ fill=red!50] (12+0,0)--(12+2,2)--(12+2,0)--(12+0,0);
\draw [ fill=yellow!50] (12+0,0)--(12+2,-2)--(12+2,0)--(12+0,0);
\draw [ fill=yellow!50] (12+4,0)--(12+2,2)--(12+2,0)--(12+4,0);
\draw [ fill=green!50] (12+4,0)--(12+2,-2)--(12+2,0)--(12+4,0);

\end{tikzpicture}
\end{center}
\caption{A set of $3$ Wang tiles} \label{fig_w3}
\end{figure}
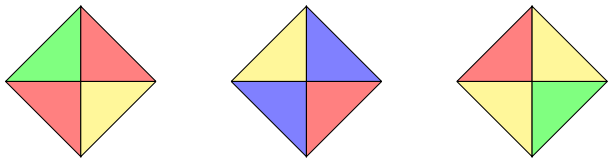

By reduction from Wang's domino problem, Ollinger obtained the first undecidability result with a fixed size of tile sets \cite{o09}. Ollinger showed that it is undecidable to tile the plane with a set of $11$ polyominoes. Ollinger's result was subsequently improved by Yang and Zhang \cite{yang23,yang23b,yz24}. In a series of works, Yang and Zhang proved that $8$ polyominoes suffice to deduce the undecidability of translational tiling of the plane, which confirms a conjecture of Ollinger.

By introducing a technique for lifting a $2$-dimensional tiling to $3$-dimensional space, Yang and Zhang show that it is undecidable to tile the $3$-dimensional space with $6$ polycubes \cite{yz24b}. Roughly speaking, the lifting technique decreases the number of tiles with a trade-off of increasing the dimension of the space by one.

\begin{Theorem}[\cite{yz24b}]\label{thm_3d_old}
    It is undecidable to tile the $3$-dimensional space with translated copies of a set of $6$ polycubes.
\end{Theorem}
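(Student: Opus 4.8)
The plan is to prove undecidability by reduction from Wang's domino problem (Theorem~\ref{thm_berger}): from a finite Wang tile set $W$ I would build, computably, a set of $6$ polycubes that admits a translational tiling of $\mathbb{Z}^3$ exactly when $W$ tiles the plane. I would carry this out in two stages. The first stage is a planar simulation of Wang tilings by polyominoes, in the style of Ollinger and of Yang--Zhang: a bounded number of ``skeleton'' polyominoes whose only translational tilings of $\mathbb{Z}^2$ are forced onto a rigid square-grid (or brick-wall) pattern, together with one ``filler'' polyomino per Wang tile, each filler carrying bumps and dents on its four sides that code the four edge colours, so that two grid-adjacent fillers interlock precisely when the shared Wang colours match. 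This step already yields planar undecidability, but the filler count grows with $|W|$, which is exactly what the lift is meant to repair.

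The second stage is the lifting from $\mathbb{Z}^2$ to $\mathbb{Z}^3 = \mathbb{Z}^2\times\mathbb{Z}$. The point of the extra coordinate is that it lets us encode the colour data ``for free'': instead of one filler per Wang tile, I would use a \emph{single} macro-cell polycube (or a small constant number, should one not suffice) whose horizontal footprint is one fixed shape, while its vertical profile is a staircase that can slide in the $z$-direction. A colour is then recorded by the $z$-offset at which two macro-cells meet across a grid edge, and the colour-matching constraint becomes a purely geometric interlocking condition between the two staircases. The skeleton is realized by a few thickened polycubes whose task is to force every tiling of $\mathbb{Z}^3$ into the intended layered, columnar form --- each macro-cell copy sitting over one grid column, with the $z$-offsets along every row and every column globally coherent --- so that projecting onto the first two coordinates returns a legitimate Wang tiling of the plane. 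Conversely, any (possibly aperiodic) Wang tiling of the plane is lifted column by column to a tiling of $\mathbb{Z}^3$. Counting the macro-cell polycube together with the skeleton polycubes, the construction can be arranged to use $6$ polycubes in all.

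The hard part will be establishing \emph{rigidity} of the lifted system, and in both directions simultaneously. The ``if'' direction --- explicitly lifting a given Wang tiling, checking that each of the $6$ pieces is a connected polycube, and verifying that the whole construction is computable from $W$ --- should be routine. The delicate direction is ``only if'': the third coordinate introduces a great deal of new freedom, since macro-cells or skeleton pieces could in principle be stacked, staggered, or shifted in $z$ in unintended ways, and one must show that the shapes of the $6$ polycubes --- in particular how the staircase profiles lock into the skeleton and into one another --- leave no such slack, so that every tiling of $\mathbb{Z}^3$ genuinely descends to a Wang tiling of the plane. Once this structural analysis is in place, chaining the reduction with the undecidability of Wang's domino problem yields the theorem.
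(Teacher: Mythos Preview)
First, note that this theorem is not proved in the present paper: it is quoted from \cite{yz24b}, and the only description given here is that the argument ``lifts'' the $2$-dimensional $8$-polyomino undecidability result to $\mathbb{Z}^3$, the lift being what cuts the tile count from $8$ to $6$ (the same lifting idea is reused in Section~\ref{sec_4d} to go from $3$ to $4$ dimensions). At that level your plan --- reduce from Wang's domino problem, simulate in the plane, then exploit the extra coordinate to merge several tiles into fewer --- agrees with what is reported.

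Your concrete mechanism, however, has a genuine gap. You propose to record the colour of a shared Wang edge by the \emph{relative $z$-offset} between the two adjacent macro-cells. If the macro-cell over grid site $(i,j)$ sits at height $h(i,j)$ and the colour on an edge is read off from the difference of heights across that edge, then walking around any unit plaquette forces the four edge-differences to sum to zero. Arbitrary Wang tile sets do not satisfy any such cocycle condition --- the four edge colours around a square are completely unconstrained --- so your encoding cannot faithfully simulate a general Wang system, and the ``if'' direction of the reduction already fails (a valid Wang tiling need not lift to any assignment of heights). You list ``global coherence of the $z$-offsets'' as something the skeleton should enforce, but it is in fact an \emph{obstruction} to the reduction, not a harmless rigidity condition. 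The actual lift in \cite{yz24b}, as recapitulated for the $3$-to-$4$-dimensional step in Section~\ref{sec_4d}, avoids this entirely: colour information stays in local bump/dent shapes just as in the planar construction, and the extra coordinate is used only to let certain \emph{filler/linker} pieces sit at more than one offset in the new direction, so that two distinct auxiliary tiles in the lower dimension become a single shape placed in two ways. That is how $8$ becomes $6$, not by compressing the Wang-tile data into a global height function.
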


The main contribution of this paper is to prove the following undecidability result with $4$ tiles in $4$-dimensional space, which is a step towards the undecidability of translational tiling of $\mathbb{Z}^n$ for some sufficiently large $n$ with one tile.

\begin{Theorem}[Undecidability with Four Tiles]\label{thm_main}
    It is undecidable to tile the $4$-dimensional space with translated copies of a set of $4$ polyhypercubes.
\end{Theorem}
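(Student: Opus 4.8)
The plan is to prove the statement in two stages: first establish the undecidability of translational tiling of $\mathbb{Z}^3$ with a set of five polycubes (a sharpening of Theorem~\ref{thm_3d_old}), and then transfer this downward by one tile at the cost of one extra dimension. For Stage~1 I would reduce Wang's domino problem (Theorem~\ref{thm_berger}) to tiling $\mathbb{Z}^3$ with five polycubes, following the ``rigid scaffold plus content pieces'' paradigm behind Theorem~\ref{thm_3d_old}: one or two polycubes interlock into a three-dimensional skeleton whose cavities are forced onto a sublattice, and the remaining polycubes are colour-carrying pieces that fill the cavities, their admissible adjacencies being controlled by bumps and notches so as to encode exactly the horizontal and vertical matching rules of a given Wang-tile set. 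The saving of one tile over Theorem~\ref{thm_3d_old} should come from exploiting the third coordinate more aggressively --- for instance routing the two matching relations through different coordinate hyperplanes so that a single polycube can serve a double purpose, or absorbing a dedicated ``filler'' polycube into the skeleton via a genuinely three-dimensional interlock. One then verifies, as usual, that every tiling of $\mathbb{Z}^3$ by the five polycubes decodes to a legal Wang tiling (the forcing direction) and that every Wang tiling reassembles into such a tiling (the easy direction, by a uniform local rule).

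For Stage~2, write $\mathbb{Z}^4 = \mathbb{Z}^3 \times \mathbb{Z}$ and call the last coordinate the \emph{level}. Given the five polycubes $T_1,\dots,T_5$ produced by Stage~1, arranged so that (in every tiling) $T_4$ and $T_5$ occur in step level by level, I would build four polyhypercubes as follows: $\widehat{T}_i = T_i\times\{0\}$ for $i=1,2,3$, possibly decorated with small bumps in the level-$(\pm1)$ directions that pin consecutive levels into registration, while $\widehat{T}_4$ is a single polyhypercube whose level-$0$ slice is a copy of $T_4$ and whose level-$1$ slice is a copy of $T_5$, the two slices placed far apart in the $\mathbb{Z}^3$ directions. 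The key point is that one four-dimensional tile then discharges the duties of \emph{both} $T_4$ and $T_5$: each placed copy of $\widehat{T}_4$ deposits a $T_4$-footprint on some level and a $T_5$-footprint on the level immediately above, and since the three slab-tiles keep every level essentially planar, a given level only ever ``sees'' isolated $T_4$- or $T_5$-footprints, which interact with the slabs exactly as the original polycubes do in $\mathbb{Z}^3$. The forward implication is then routine: from a tiling of $\mathbb{Z}^3$ one assembles a tiling of $\mathbb{Z}^4$ level by level, using $\widehat{T}_4$ to place a $T_5$ on level $m+1$ in lockstep with a $T_4$ on level $m$, so that the bookkeeping closes up periodically.

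The main obstacle is the converse half of Stage~2: one must show that \emph{every} tiling of $\mathbb{Z}^4$ by $\{\widehat{T}_1,\dots,\widehat{T}_4\}$ is \emph{laminar} in the level coordinate --- each hyperplane $\mathbb{Z}^3\times\{m\}$ being a union of whole slices of placed tiles --- and that the arrangement induced on each hyperplane is precisely a tiling of $\mathbb{Z}^3$ by translates of $T_1,\dots,T_5$ in which the $T_4$'s and $T_5$'s are paired as $\widehat{T}_4$ dictates. Since only translations are permitted, this rigidity must be engineered rather than assumed: I would equip the polyhypercubes with a system of interlocking bumps and notches in the level-$(\pm1)$ directions, and if necessary a coprime ``brick-wall'' offset along one $\mathbb{Z}^3$ direction, so that the only overlap-free way to seat two neighbouring tiles is to have them share a common level-hyperplane, ruling out any diagonal shear across levels. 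Establishing this no-shear property --- that the four polyhypercubes admit no exotic, non-laminar packing --- is the technical heart of the argument, and is where the bulk of the case analysis will live. Once laminarity is secured, each level is by construction a tiling of $\mathbb{Z}^3$ by $T_1,\dots,T_5$, so Stage~1 completes the chain of reductions from Wang's domino problem and establishes the undecidability of translational tiling of $\mathbb{Z}^4$ with a set of four polyhypercubes.
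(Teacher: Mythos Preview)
Your two-stage plan matches the paper's outline---first a five-polycube undecidability result in $\mathbb{Z}^3$, then a lift to four polyhypercubes in $\mathbb{Z}^4$---but the mechanism you propose for Stage~2 is not the one the paper uses, and it has a genuine gap.

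Your Stage~2 idea is to fuse two of the five polycubes, say $T_4$ and $T_5$, into a single polyhypercube by putting a copy of $T_4$ on level~$0$ and a copy of $T_5$ on level~$1$. For this to work you need that in \emph{every} tiling of $\mathbb{Z}^3$ by $T_1,\dots,T_5$ the occurrences of $T_4$ and $T_5$ are in bijection via a single fixed translation vector. You assert this can be ``arranged'', but for the actual five-tile set the paper constructs (an encoder, a selector, a linker, and two fillers $F$, $F^+$ of different sizes) it is false: the two fillers plug holes whose sizes depend on the colour bits of adjacent encoding blocks, so the relative counts and positions of $F$ versus $F^+$ vary with the simulated Wang tiling and are not paired by any fixed shift. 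A mechanical ``stack two tiles'' lift cannot absorb this. Your laminarity worry is also real, but it is downstream of this more basic counting obstruction.

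What the paper does instead is to \emph{re-encode} the colour information when lifting. In the $3$-dimensional construction a colour bit is carried by a size difference ($c$ versus $c^-$), which is why two fillers of different lengths are needed. In the $4$-dimensional construction every tile is thickened to eight time-frames, all bumps and dents are rotated into the time direction, and a colour bit is now carried by \emph{which half of the time-slice} a building block is attached to ($c^*$ versus $c_*$). Because the geometry in the three spatial directions is now uniform, a single filler suffices; the residual freedom is taken up by allowing the linker to sit either aligned with a time-slice (connecting two $c_*$'s) or straddling two adjacent slices (connecting two $c^*$'s). The ``time tunnel'' between adjacent encoders can be filled by linkers exactly when the colour bits match, which is what encodes the Wang matching condition. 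Rigidity across slices is enforced not by generic bump/notch decorations but by dedicated building blocks $\mathbb{E}$ and $\mathbb{S}$ whose bumps reach three frames into the previous slice and lock the selector/encoder lattice into exact repetition along the time axis. So the saving of one tile comes from a redesign of the encoding to exploit the new dimension, not from gluing two old tiles together.
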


To prove Theorem \ref{thm_main}, we first prove Theorem \ref{thm_3d_new} on the undecidability of translational tiling of $3$-dimensional space with $5$ tiles, which improves the $6$-tile undecidability result (Theorem \ref{thm_3d_old}). A novel framework of reduction is introduced in the proof of Theorem \ref{thm_3d_new}, without using the lifting technique. Note that it is crucial that the new proof method is not a direct lift from a $2$-dimensional tiling, as the lifting technique cannot be applied twice. The main result, Theorem \ref{thm_main}, is then proved by lifting the $3$-dimensional tiling of Theorem \ref{thm_3d_new} to $4$-dimensional space.

\begin{Theorem}[Undecidability with Five Tiles]\label{thm_3d_new}
    It is undecidable to tile the $3$-dimensional space with translated copies of a set of $5$ polycubes.
\end{Theorem}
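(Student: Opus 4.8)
The plan is to give a computable reduction from Wang's domino problem --- undecidable by Theorem~\ref{thm_berger} --- to the five-polycube tiling problem. From a finite set $W$ of Wang tiles I would build a set $S_W$ of \emph{exactly five} polycubes, and show that $S_W$ tiles $\mathbb{Z}^3$ by translations if and only if $W$ tiles the plane. The five shapes in $S_W$ are allowed to depend on $W$ --- in general they are large and carry a full description of $W$ inside their geometry --- and only their number is held fixed; since $W\mapsto S_W$ is computable and answer-preserving, undecidability transfers. Compared with the existing six-polycube result (Theorem~\ref{thm_3d_old}), the genuine content is to save one tile, and --- as the paper stresses --- to do so by a fresh reduction rather than by the dimension-lifting of \cite{yz24b}, since a lifted construction cannot be lifted a second time, whereas Theorem~\ref{thm_main} will lift \emph{this} construction into $\mathbb{Z}^4$.

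Structurally I would organise $S_W$ around a rigid \emph{scaffold}: an interlocking, comb-like polycube (using one, or at most two, of the five shapes) engineered so that in every tiling of $\mathbb{Z}^3$ by $S_W$ its copies are pinned onto a fixed sublattice and partition space into congruent rectangular ``rooms'', arranged in a two-dimensional array (with a bounded, $W$-independent period in the remaining direction) so that each room plays the role of one cell of the prospective Wang tiling. The remaining shapes operate inside the rooms and across their walls. A ``selector'' piece is forced into each room at one of $|W|$ admissible translational offsets; the offset records which tile $t\in W$ occupies that cell, and the room geometry exposes precisely the four edge colours of $t$ --- encoded as bumps and dents on the piece's boundary --- through the four walls of the room. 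A wall shared by two adjacent rooms can then be completed only when the bump exported from one side meshes with the dent expected on the other, which is exactly the Wang adjacency condition; the last filler piece(s) close up the interior of each room around the selector and, via their $W$-dependent shape, also certify that the four exported colours really belong to a single tile of $W$. The crux here is that a naive design wants separate shapes for the scaffold, for tile selection, for horizontal matching, for vertical matching, and for interior filling --- more than five in total, as in the planar constructions --- and the novelty is amalgamating these roles down to exactly five.

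Three dimensions make this possible because the extra coordinate supplies ``wiring room'': the horizontal and vertical colour-matching channels of a room can be routed through disjoint slabs of the third coordinate, so one selector can carry its north/south data and its east/west data on separate layers without collision, and filler pieces that a planar construction would duplicate per direction can be fused. I would make sure the pieces extend nontrivially in all three directions (rather than being prisms over planar shapes), so the construction is honestly three-dimensional and still eligible for the $\mathbb{Z}^4$ lift of Theorem~\ref{thm_main}.

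Correctness splits as usual. The ``if'' direction is constructive: from a valid Wang tiling of $\mathbb{Z}^2$ one reads off, cell by cell, the selector offset and the bump/dent data, places scaffolds, selectors and fillers in the evident pattern (periodic in the spectator direction), and checks directly that this is a tiling of $\mathbb{Z}^3$. The ``only if'' direction is the main obstacle: a rigidity lemma asserting that \emph{every} translational tiling of $\mathbb{Z}^3$ by $S_W$ is forced into the intended room structure --- scaffolds on the lattice, one well-placed selector per room at an admissible offset, consistent bump/dent data along every wall --- so that projecting to the array of rooms yields a valid Wang tiling. Proving this means excluding all degenerate behaviours: scaffolds that interlock but are globally shifted, fault lines between blocks, selectors straddling walls, filler pieces impersonating selectors, colour bumps parking in unintended positions, and so on. Carrying out this structural analysis while the available toolkit is only five shapes is where the design has to be pinned down with real care, and I expect it --- rather than the encoding or the easy direction --- to be the heart of the argument.
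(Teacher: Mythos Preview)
Your high-level architecture matches the paper's: a reduction from Wang's domino problem in which one rigid ``scaffold'' piece tiles into a lattice of cells, one piece slides into each cell at one of $|W|$ vertical offsets to name a Wang tile, and the remaining pieces enforce colour-matching across cell boundaries and fill residual gaps. (The paper's terminology is swapped relative to yours: it calls the scaffold the \emph{selector} and the tile-naming piece the \emph{encoder}; the five tiles are selector $+$ encoder $+$ one linker $+$ two fillers.)

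Where you diverge is in the concrete mechanism that brings the count down to five. You propose to keep all four walls of a room active and route the north/south and east/west colour channels through disjoint $z$-slabs, fusing directionally duplicated pieces. The paper instead \emph{eliminates east/west matching altogether}: the scaffold cells are laid in a brick pattern (each horizontal row offset by half a cell from the next), so every Wang adjacency---including the diagonal ones that play the role of east/west---is realised as a north/south contact between a cell and the two half-cells above or below it. All four Wang colours therefore live on the encoder's north and south faces only, and a \emph{single} linker shape handles every adjacency. The colour bits are encoded not by bump shape but by a one-unit length difference (building blocks $c$ vs.\ $c^{-}$ on the north, $d$ vs.\ $d^{-}$ on the south), so that the fixed-length linker fits between two encoders exactly when the opposing bits agree; two small fillers ($F$ and $F^{+}$) absorb the resulting one-unit slack inside the scaffold on the non-window layers. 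Your slab-routing idea is not obviously wrong, but it is a different route and would need its own accounting to land on five.

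This distinction also matters for the sequel. The $4$-dimensional lift in Theorem~\ref{thm_main} works by pushing all dents and bumps into the time coordinate and letting the linker be time-misaligned, which collapses the two $3$-dimensional fillers into one; that trick is tuned to the paper's north/south-only, length-difference encoding. If you pursue your four-wall design you should check that whatever you end up with still admits a one-tile-saving lift.
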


The current state of knowledge about the decidability and undecidability of the translational tiling problem of $\mathbb{Z}^n$ with a set of $k$ tiles for the fixed pairs $(k,n)$ can be summarized in Figure \ref{fig_nk}. The green region is known to be decidable, and the red region is (possibly) undecidable. Note that the frontier of the region of undecidable is not clearly known yet, especially for dimensions $5$ and above. The dashed line of the boundary of the undecidable region in Figure \ref{fig_nk} is to demonstrate the idea that as the dimension goes up, it may require fewer tiles to get undecidable results for translational tiling of $\mathbb{Z}^n$.

\begin{figure}[H]
\begin{center}
\begin{tikzpicture}

\draw [fill=green!20,dashed] (11.9,0)--(0,0)--(0,2)--(1,2)--(1,1)--(11.9,1);

\begin{scope}
    \clip (0,9) rectangle (2,10.9);
\draw [fill=red!20,dashed] (0,11)--(0,9)--(3,9)--(3,11); 
\end{scope}

\begin{scope}
    \clip (3,1) rectangle (11.9,4);
\draw [fill=red!20,dashed] (12,1)--(7,1)--(7,2)--(4,2)--(4,3)--(3,3)--(3,4)--(12,4); 
\end{scope}

\begin{scope}
    \clip (1.64,3) rectangle (11.9,10.9);

\draw [dashed,fill=red!20] plot [smooth] coordinates {(0,11) (2,8.5) (2.5,6.5) (3,4) (3.5,1) (10,0) (13,7) (13,15) (5,12)};
\end{scope}

\draw[help lines, color=gray, dashed]  (-0.1,-0.1) grid (11.9,10.9);
\draw[->,ultra thick] (-0.5,0)--(12,0) node[right]{$k$};
\draw[->,ultra thick] (0,-0.5)--(0,11) node[above]{$n$};

\foreach \y in {1,2,3,4,5}
{
\node at (-0.5,\y-0.5) {\y};
}
\node at (-0.9,9.7) {some};
\node at (-0.8,9.3) {large $n$?};

\foreach \x in {1,...,11}
{
\node at (\x-0.5,-0.5) {\x};
}

\end{tikzpicture}
\end{center}
\caption{Translational tiling problem of $\mathbb{Z}^n$ with a set of $k$ tiles.}\label{fig_nk}
\end{figure}
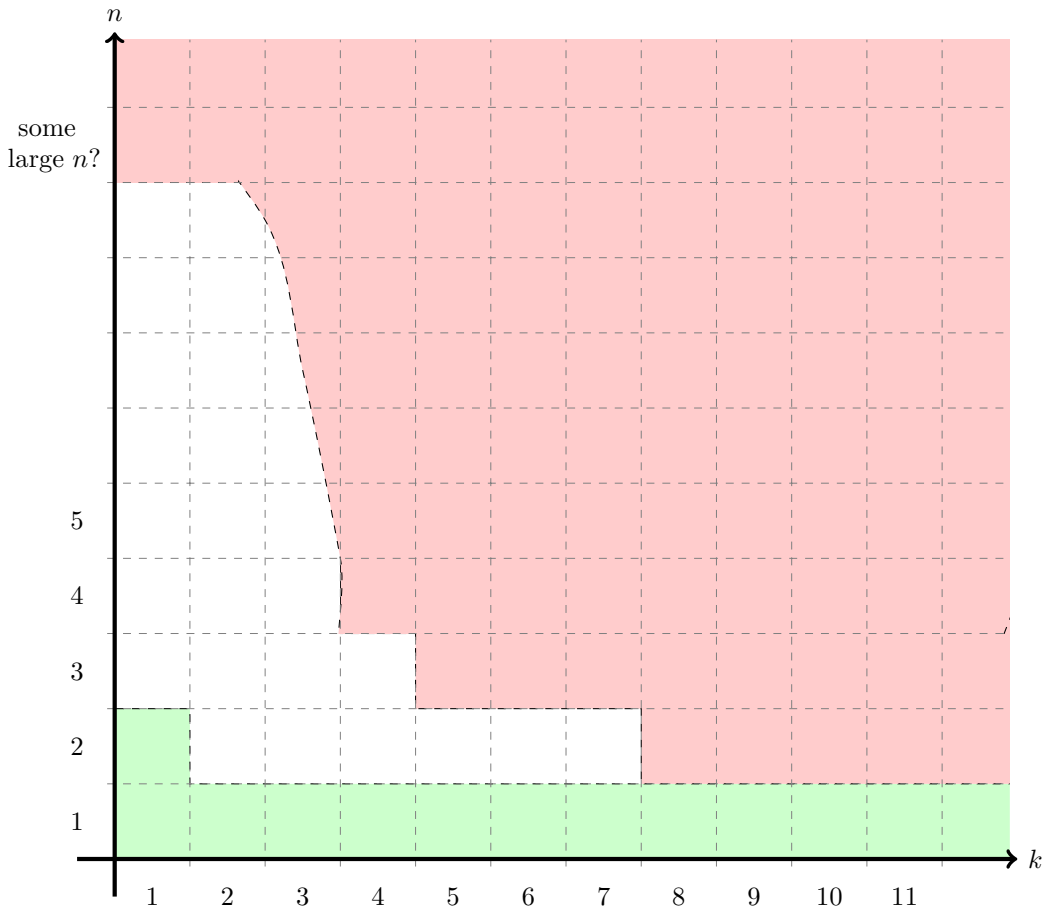

The rest of the paper is organized as follows. Section \ref{sec_3d} proves Theorem \ref{thm_3d_new} by introducing a reduction framework different from that of Ollinger in several aspects. Section \ref{sec_4d} proves Theorem \ref{thm_main} by lifting the $3$-dimensional tile set and its tiling in the proof of Theorem \ref{thm_3d_new} to $4$-dimensional space. Section \ref{sec_conclu} concludes with a few remarks on future work.

\section{Undecidability of Tiling $3$-dimensional Space}\label{sec_3d}

In this section, we will prove Theorem \ref{thm_3d_new} by constructing a set of $5$ polycubes which is compatible with the lifting technique (i.e. the number of tiles can be decreased by applying the technique). 

\subsection{Building Blocks}

We first define several building blocks which will be used in the construction of the more sophisticated polycubes in the $5$-tile set in next subsection. A \textit{polycube} is the union of a finit number of $1\times 1\times 1$ unit cubes gluing together face-to-face. A \textit{functional cube} is a $8\times 8\times 8$ polycube. Figure \ref{fig_3d_c} is the layer diagram for building block $c$ for encoding the colors of Wang tiles, from the bottom most layer (first layer) to the top most layer (eighth layer). The building block $c$ is a functional cube with a dent on its north side. Figure \ref{fig_3d_c_minus} is the layer diagram for building block $c^-$, which is a $8\times 7\times 8$ polycube with a dent on its north side. The shapes of the dents of the building blocks $c$ and $c^-$ are exactly the same. The only difference of these two building blocks is that the base polycube of $c^-$ is one unit shorter in the south-north direction than that of $c$.

Note that all figures in this subsection is depicted in \textit{level-1} layer diagrams, where each gray square represents a $1\times 1\times 1$ unit cube.


\begin{figure}[H]
\begin{center}
\begin{tikzpicture}[scale=0.4]

\foreach \x in {0,10,20,30}
\foreach \y in {0,...,8} 
{ 
\draw  (\x+0,0+\y)--(\x+8,0+\y);
\draw  (\x,0)--(\x,8);
\draw  (\x+8,0)--(\x+8,8);
\draw  (\x+7,0)--(\x+7,8);
\draw  (\x+6,0)--(\x+6,8);
\draw  (\x+5,0)--(\x+5,8);
\draw  (\x+4,0)--(\x+4,8);
\draw  (\x+3,0)--(\x+3,8);
\draw  (\x+2,0)--(\x+2,8);
\draw  (\x+1,0)--(\x+1,8);
}

\foreach \x in {0,...,7}
\foreach \y in {0,...,7} 
{
\draw [fill=gray!20] (\x+0,1+\y)--(\x+1,1+\y)--(\x+1,0+\y)--(\x+0,0+\y)--(\x+0,1+\y);
}

\foreach \x in {10,...,17}
\foreach \y in {0,1,2,3,4,7} 
{
\draw [fill=gray!20] (\x+0,1+\y)--(\x+1,1+\y)--(\x+1,0+\y)--(\x+0,0+\y)--(\x+0,1+\y);
}
\foreach \x in {10,17}
\foreach \y in {5,6} 
{
\draw [fill=gray!20] (\x+0,1+\y)--(\x+1,1+\y)--(\x+1,0+\y)--(\x+0,0+\y)--(\x+0,1+\y);
}

\foreach \x in {20,...,27}
\foreach \y in {0,1,2,3,4,7} 
{
\draw [fill=gray!20] (\x+0,1+\y)--(\x+1,1+\y)--(\x+1,0+\y)--(\x+0,0+\y)--(\x+0,1+\y);
}
\foreach \x in {20,27}
\foreach \y in {5,6} 
{
\draw [fill=gray!20] (\x+0,1+\y)--(\x+1,1+\y)--(\x+1,0+\y)--(\x+0,0+\y)--(\x+0,1+\y);
}
\foreach \x in {22,...,25}
\foreach \y in {6} 
{
\draw [fill=gray!20] (\x+0,1+\y)--(\x+1,1+\y)--(\x+1,0+\y)--(\x+0,0+\y)--(\x+0,1+\y);
}

\foreach \x in {30,...,37}
\foreach \y in {0,...,4} 
{
\draw [fill=gray!20] (\x+0,1+\y)--(\x+1,1+\y)--(\x+1,0+\y)--(\x+0,0+\y)--(\x+0,1+\y);
}
\foreach \x in {30,37}
\foreach \y in {5,6} 
{
\draw [fill=gray!20] (\x+0,1+\y)--(\x+1,1+\y)--(\x+1,0+\y)--(\x+0,0+\y)--(\x+0,1+\y);
}
\foreach \x in {32,35}
\foreach \y in {6} 
{
\draw [fill=gray!20] (\x+0,1+\y)--(\x+1,1+\y)--(\x+1,0+\y)--(\x+0,0+\y)--(\x+0,1+\y);
}
\foreach \x in {30,31,32,35,36,37}
\foreach \y in {7} 
{
\draw [fill=gray!20] (\x+0,1+\y)--(\x+1,1+\y)--(\x+1,0+\y)--(\x+0,0+\y)--(\x+0,1+\y);
}

\node at (4,-1) {$1$st layer};  \node at (14,-1) {$2$nd layer};  \node at (24,-1) {$3$rd layer};  \node at (34,-1) {$4$th and 5th layers};

\node at (4,-11) {$6$th layer};  \node at (14,-11) {$7$th layer};  \node at (24,-11) {$8$th layer};


\foreach \x in {0,10,20}
\foreach \y in {-10,...,-2} 
{ 
\draw  (\x+0,0+\y)--(\x+8,0+\y);
\draw  (\x,-10)--(\x,-2);
\draw  (\x+8,-10)--(\x+8,-2);
\draw  (\x+7,-10)--(\x+7,-2);
\draw  (\x+6,-10)--(\x+6,-2);
\draw  (\x+5,-10)--(\x+5,-2);
\draw  (\x+4,-10)--(\x+4,-2);
\draw  (\x+3,-10)--(\x+3,-2);
\draw  (\x+2,-10)--(\x+2,-2);
\draw  (\x+1,-10)--(\x+1,-2);
}

\foreach \x in {0,...,7}
\foreach \y in {-10,-9,-8,-7,-6,-3} 
{
\draw [fill=gray!20] (\x+0,1+\y)--(\x+1,1+\y)--(\x+1,0+\y)--(\x+0,0+\y)--(\x+0,1+\y);
}
\foreach \x in {0,7}
\foreach \y in {-5,-4} 
{
\draw [fill=gray!20] (\x+0,1+\y)--(\x+1,1+\y)--(\x+1,0+\y)--(\x+0,0+\y)--(\x+0,1+\y);
}
\foreach \x in {2,...,5}
\foreach \y in {-4} 
{
\draw [fill=gray!20] (\x+0,1+\y)--(\x+1,1+\y)--(\x+1,0+\y)--(\x+0,0+\y)--(\x+0,1+\y);
}

\foreach \x in {10,...,17}
\foreach \y in {-10,-9,-8,-7,-6,-3} 
{
\draw [fill=gray!20] (\x+0,1+\y)--(\x+1,1+\y)--(\x+1,0+\y)--(\x+0,0+\y)--(\x+0,1+\y);
}
\foreach \x in {10,17}
\foreach \y in {-5,-4} 
{
\draw [fill=gray!20] (\x+0,1+\y)--(\x+1,1+\y)--(\x+1,0+\y)--(\x+0,0+\y)--(\x+0,1+\y);
}

\foreach \x in {20,...,27}
\foreach \y in {-10,...,-3} 
{
\draw [fill=gray!20] (\x+0,1+\y)--(\x+1,1+\y)--(\x+1,0+\y)--(\x+0,0+\y)--(\x+0,1+\y);
}

\end{tikzpicture}
\end{center}
\caption{Level-1 layer diagram of $c$, a functional cube with a dent.}\label{fig_3d_c}
\end{figure}


\begin{figure}[H]
\begin{center}
\begin{tikzpicture}[scale=0.4]

\foreach \x in {0,10,20,30}
\foreach \y in {0,...,8} 
{ 
\draw  (\x+0,0+\y)--(\x+8,0+\y);
\draw  (\x,0)--(\x,8);
\draw  (\x+8,0)--(\x+8,8);
\draw  (\x+7,0)--(\x+7,8);
\draw  (\x+6,0)--(\x+6,8);
\draw  (\x+5,0)--(\x+5,8);
\draw  (\x+4,0)--(\x+4,8);
\draw  (\x+3,0)--(\x+3,8);
\draw  (\x+2,0)--(\x+2,8);
\draw  (\x+1,0)--(\x+1,8);
}

\foreach \x in {0,...,7}
\foreach \y in {0,...,6} 
{
\draw [fill=gray!20] (\x+0,1+\y)--(\x+1,1+\y)--(\x+1,0+\y)--(\x+0,0+\y)--(\x+0,1+\y);
}

\foreach \x in {10,...,17}
\foreach \y in {0,1,2,3,6} 
{
\draw [fill=gray!20] (\x+0,1+\y)--(\x+1,1+\y)--(\x+1,0+\y)--(\x+0,0+\y)--(\x+0,1+\y);
}
\foreach \x in {10,17}
\foreach \y in {4,5} 
{
\draw [fill=gray!20] (\x+0,1+\y)--(\x+1,1+\y)--(\x+1,0+\y)--(\x+0,0+\y)--(\x+0,1+\y);
}

\foreach \x in {20,...,27}
\foreach \y in {0,1,2,3,6} 
{
\draw [fill=gray!20] (\x+0,1+\y)--(\x+1,1+\y)--(\x+1,0+\y)--(\x+0,0+\y)--(\x+0,1+\y);
}
\foreach \x in {20,27}
\foreach \y in {4,5} 
{
\draw [fill=gray!20] (\x+0,1+\y)--(\x+1,1+\y)--(\x+1,0+\y)--(\x+0,0+\y)--(\x+0,1+\y);
}
\foreach \x in {22,...,25}
\foreach \y in {5} 
{
\draw [fill=gray!20] (\x+0,1+\y)--(\x+1,1+\y)--(\x+1,0+\y)--(\x+0,0+\y)--(\x+0,1+\y);
}

\foreach \x in {30,...,37}
\foreach \y in {0,...,3} 
{
\draw [fill=gray!20] (\x+0,1+\y)--(\x+1,1+\y)--(\x+1,0+\y)--(\x+0,0+\y)--(\x+0,1+\y);
}
\foreach \x in {30,37}
\foreach \y in {5,4} 
{
\draw [fill=gray!20] (\x+0,1+\y)--(\x+1,1+\y)--(\x+1,0+\y)--(\x+0,0+\y)--(\x+0,1+\y);
}
\foreach \x in {32,35}
\foreach \y in {5} 
{
\draw [fill=gray!20] (\x+0,1+\y)--(\x+1,1+\y)--(\x+1,0+\y)--(\x+0,0+\y)--(\x+0,1+\y);
}
\foreach \x in {30,31,32,35,36,37}
\foreach \y in {6} 
{
\draw [fill=gray!20] (\x+0,1+\y)--(\x+1,1+\y)--(\x+1,0+\y)--(\x+0,0+\y)--(\x+0,1+\y);
}

\node at (4,-1) {$1$st layer};  \node at (14,-1) {$2$nd layer};  \node at (24,-1) {$3$rd layer};  \node at (34,-1) {$4$th and 5th layers};

\node at (4,-11) {$6$th layer};  \node at (14,-11) {$7$th layer};  \node at (24,-11) {$8$th layer};


\foreach \x in {0,10,20}
\foreach \y in {-10,...,-2} 
{ 
\draw  (\x+0,0+\y)--(\x+8,0+\y);
\draw  (\x,-10)--(\x,-2);
\draw  (\x+8,-10)--(\x+8,-2);
\draw  (\x+7,-10)--(\x+7,-2);
\draw  (\x+6,-10)--(\x+6,-2);
\draw  (\x+5,-10)--(\x+5,-2);
\draw  (\x+4,-10)--(\x+4,-2);
\draw  (\x+3,-10)--(\x+3,-2);
\draw  (\x+2,-10)--(\x+2,-2);
\draw  (\x+1,-10)--(\x+1,-2);
}

\foreach \x in {0,...,7}
\foreach \y in {-10,-9,-8,-7,-4} 
{
\draw [fill=gray!20] (\x+0,1+\y)--(\x+1,1+\y)--(\x+1,0+\y)--(\x+0,0+\y)--(\x+0,1+\y);
}
\foreach \x in {0,7}
\foreach \y in {-6,-5} 
{
\draw [fill=gray!20] (\x+0,1+\y)--(\x+1,1+\y)--(\x+1,0+\y)--(\x+0,0+\y)--(\x+0,1+\y);
}
\foreach \x in {2,...,5}
\foreach \y in {-5} 
{
\draw [fill=gray!20] (\x+0,1+\y)--(\x+1,1+\y)--(\x+1,0+\y)--(\x+0,0+\y)--(\x+0,1+\y);
}

\foreach \x in {10,...,17}
\foreach \y in {-10,-9,-8,-7,-4} 
{
\draw [fill=gray!20] (\x+0,1+\y)--(\x+1,1+\y)--(\x+1,0+\y)--(\x+0,0+\y)--(\x+0,1+\y);
}
\foreach \x in {10,17}
\foreach \y in {-5,-6} 
{
\draw [fill=gray!20] (\x+0,1+\y)--(\x+1,1+\y)--(\x+1,0+\y)--(\x+0,0+\y)--(\x+0,1+\y);
}

\foreach \x in {20,...,27}
\foreach \y in {-10,...,-4} 
{
\draw [fill=gray!20] (\x+0,1+\y)--(\x+1,1+\y)--(\x+1,0+\y)--(\x+0,0+\y)--(\x+0,1+\y);
}

\end{tikzpicture}
\end{center}
\caption{Level-1 layer diagram of $c^-$, a functional cube with a dent.}\label{fig_3d_c_minus}
\end{figure}

The building blocks $C$ (see Figure \ref{fig_3d_C}) is a functional cube with a bump on its south side. The shape of the bump of $C$ matches exactly the shape of the dent of $c$. In fact, a building block $C$ can be placed properly to the north of a building block $c$ to form a $8\times 16\times 8$ cuboid polycube without gaps or overlaps.


\begin{figure}[H]
\begin{center}
\begin{tikzpicture}[scale=0.4]

\foreach \x in {0,10,20,30}
\foreach \y in {0,...,8} 
{ 
\draw  (\x+0,0+\y)--(\x+8,0+\y);
\draw  (\x,0)--(\x,8);
\draw  (\x+8,0)--(\x+8,8);
\draw  (\x+7,0)--(\x+7,8);
\draw  (\x+6,0)--(\x+6,8);
\draw  (\x+5,0)--(\x+5,8);
\draw  (\x+4,0)--(\x+4,8);
\draw  (\x+3,0)--(\x+3,8);
\draw  (\x+2,0)--(\x+2,8);
\draw  (\x+1,0)--(\x+1,8);
}

\foreach \x in {0,...,7}
\foreach \y in {0,...,7} 
{
\draw [fill=gray!20] (\x+0,1+\y)--(\x+1,1+\y)--(\x+1,0+\y)--(\x+0,0+\y)--(\x+0,1+\y);
}

\foreach \x in {10,...,17}
\foreach \y in {0,...,7} 
{
\draw [fill=gray!20] (\x+0,1+\y)--(\x+1,1+\y)--(\x+1,0+\y)--(\x+0,0+\y)--(\x+0,1+\y);
}
\foreach \x in {11,...,16}
\foreach \y in {-2,-3} 
{
\draw [fill=gray!20] (\x+0,1+\y)--(\x+1,1+\y)--(\x+1,0+\y)--(\x+0,0+\y)--(\x+0,1+\y);
}

\foreach \x in {20,...,27}
\foreach \y in {0,...,7} 
{
\draw [fill=gray!20] (\x+0,1+\y)--(\x+1,1+\y)--(\x+1,0+\y)--(\x+0,0+\y)--(\x+0,1+\y);
}
\foreach \x in {21,26}
\foreach \y in {-2} 
{
\draw [fill=gray!20] (\x+0,1+\y)--(\x+1,1+\y)--(\x+1,0+\y)--(\x+0,0+\y)--(\x+0,1+\y);
}
\foreach \x in {21,...,26}
\foreach \y in {-3} 
{
\draw [fill=gray!20] (\x+0,1+\y)--(\x+1,1+\y)--(\x+1,0+\y)--(\x+0,0+\y)--(\x+0,1+\y);
}

\foreach \x in {30,...,37}
\foreach \y in {0,...,7} 
{
\draw [fill=gray!20] (\x+0,1+\y)--(\x+1,1+\y)--(\x+1,0+\y)--(\x+0,0+\y)--(\x+0,1+\y);
}
\foreach \x in {33,34}
\foreach \y in {-1,-2} 
{
\draw [fill=gray!20] (\x+0,1+\y)--(\x+1,1+\y)--(\x+1,0+\y)--(\x+0,0+\y)--(\x+0,1+\y);
}
\foreach \x in {31,36}
\foreach \y in {-2} 
{
\draw [fill=gray!20] (\x+0,1+\y)--(\x+1,1+\y)--(\x+1,0+\y)--(\x+0,0+\y)--(\x+0,1+\y);
}
\foreach \x in {31,...,36}
\foreach \y in {-3} 
{
\draw [fill=gray!20] (\x+0,1+\y)--(\x+1,1+\y)--(\x+1,0+\y)--(\x+0,0+\y)--(\x+0,1+\y);
}

\node at (4,-4) {$1$st layer};  \node at (14,-4) {$2$nd layer};  \node at (24,-4) {$3$rd layer};  \node at (34,-4) {$4$th and 5th layers};

\node at (4,-17) {$6$th layer};  \node at (14,-17) {$7$th layer};  \node at (24,-17) {$8$th layer};


\foreach \x in {0,10,20}
\foreach \y in {-13,...,-5} 
{ 
\draw  (\x+0,0+\y)--(\x+8,0+\y);
\draw  (\x,-13)--(\x,-5);
\draw  (\x+8,-13)--(\x+8,-5);
\draw  (\x+7,-13)--(\x+7,-5);
\draw  (\x+6,-13)--(\x+6,-5);
\draw  (\x+5,-13)--(\x+5,-5);
\draw  (\x+4,-13)--(\x+4,-5);
\draw  (\x+3,-13)--(\x+3,-5);
\draw  (\x+2,-13)--(\x+2,-5);
\draw  (\x+1,-13)--(\x+1,-5);
}

\foreach \x in {0,...,7}
\foreach \y in {-13,...,-6} 
{
\draw [fill=gray!20] (\x+0,1+\y)--(\x+1,1+\y)--(\x+1,0+\y)--(\x+0,0+\y)--(\x+0,1+\y);
}
\foreach \x in {1,...,6}
\foreach \y in {-16} 
{
\draw [fill=gray!20] (\x+0,1+\y)--(\x+1,1+\y)--(\x+1,0+\y)--(\x+0,0+\y)--(\x+0,1+\y);
}
\foreach \x in {1,6}
\foreach \y in {-15} 
{
\draw [fill=gray!20] (\x+0,1+\y)--(\x+1,1+\y)--(\x+1,0+\y)--(\x+0,0+\y)--(\x+0,1+\y);
}

\foreach \x in {10,...,17}
\foreach \y in {-13,...,-6} 
{
\draw [fill=gray!20] (\x+0,1+\y)--(\x+1,1+\y)--(\x+1,0+\y)--(\x+0,0+\y)--(\x+0,1+\y);
}
\foreach \x in {11,...,16}
\foreach \y in {-15,-16} 
{
\draw [fill=gray!20] (\x+0,1+\y)--(\x+1,1+\y)--(\x+1,0+\y)--(\x+0,0+\y)--(\x+0,1+\y);
}

\foreach \x in {20,...,27}
\foreach \y in {-13,...,-6} 
{
\draw [fill=gray!20] (\x+0,1+\y)--(\x+1,1+\y)--(\x+1,0+\y)--(\x+0,0+\y)--(\x+0,1+\y);
}

\end{tikzpicture}
\end{center}
\caption{Level-1 layer diagram of $C$, a functional cube with a bump.}\label{fig_3d_C}
\end{figure}
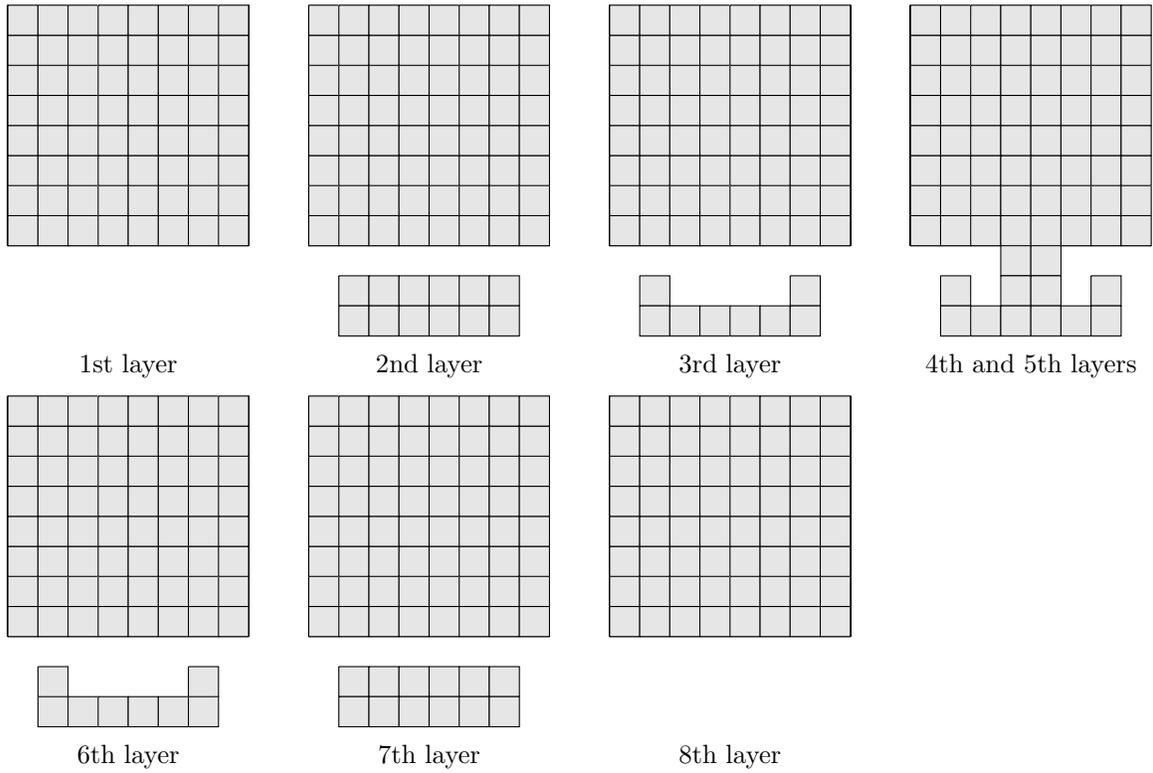

The second pair of building blocks, $d$ and $d^-$ are also used for encoding the colors of Wang tiles. The building block $d$ and $d^-$ can be obtained from $c$ and $c^-$ by a rotation of $180^{\circ}$ about a vertical axis, respectively. The building block $D^+$ is constructed from a $8\times 9\times 8$ polycube by adding a bump on its north side that matches the dent of $d$ or $d^-$. Figure \ref{fig_3d_d} and Figure \ref{fig_3d_D_plus} illustrate the building blocks $d$ and $D^+$, respectively.


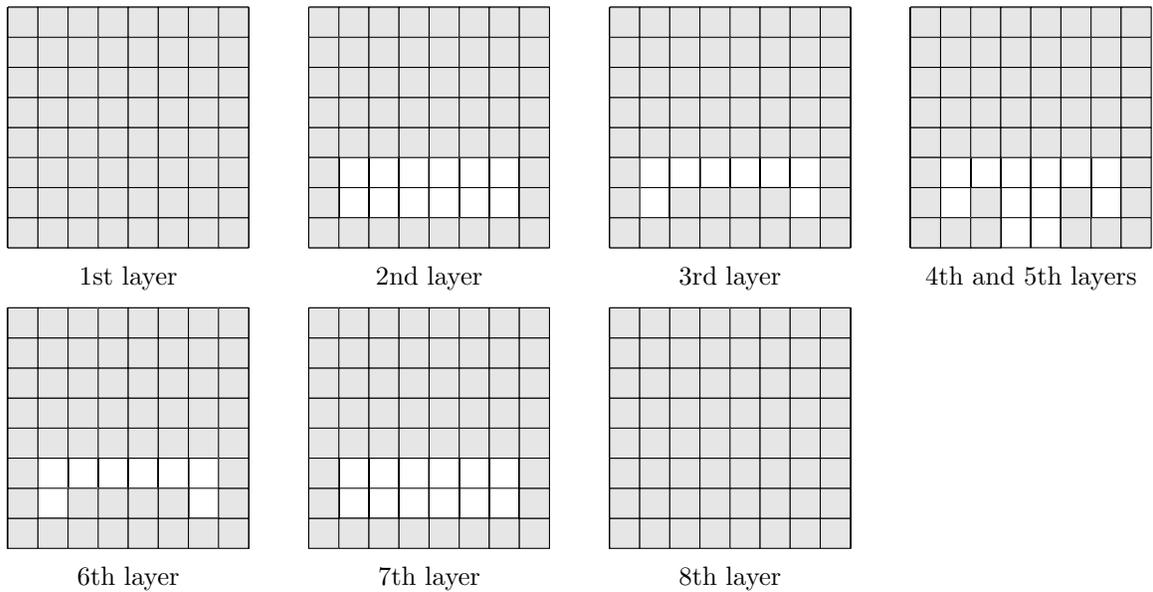
\begin{figure}[H]
\begin{center}
\begin{tikzpicture}[scale=0.4]

\foreach \x in {0,10,20,30}
\foreach \y in {0,...,8} 
{ 
\draw  (\x+0,0+\y)--(\x+8,0+\y);
\draw  (\x,0)--(\x,8);
\draw  (\x+8,0)--(\x+8,8);
\draw  (\x+7,0)--(\x+7,8);
\draw  (\x+6,0)--(\x+6,8);
\draw  (\x+5,0)--(\x+5,8);
\draw  (\x+4,0)--(\x+4,8);
\draw  (\x+3,0)--(\x+3,8);
\draw  (\x+2,0)--(\x+2,8);
\draw  (\x+1,0)--(\x+1,8);
}

\foreach \x in {0,...,7}
\foreach \y in {0,...,7} 
{
\draw [fill=gray!20] (\x+0,1+\y)--(\x+1,1+\y)--(\x+1,0+\y)--(\x+0,0+\y)--(\x+0,1+\y);
}

\foreach \x in {10,...,17}
\foreach \y in {0,3,4,5,6,7} 
{
\draw [fill=gray!20] (\x+0,1+\y)--(\x+1,1+\y)--(\x+1,0+\y)--(\x+0,0+\y)--(\x+0,1+\y);
}
\foreach \x in {10,17}
\foreach \y in {1,2} 
{
\draw [fill=gray!20] (\x+0,1+\y)--(\x+1,1+\y)--(\x+1,0+\y)--(\x+0,0+\y)--(\x+0,1+\y);
}

\foreach \x in {20,...,27}
\foreach \y in {0,3,4,5,6,7} 
{
\draw [fill=gray!20] (\x+0,1+\y)--(\x+1,1+\y)--(\x+1,0+\y)--(\x+0,0+\y)--(\x+0,1+\y);
}
\foreach \x in {20,27}
\foreach \y in {1,2} 
{
\draw [fill=gray!20] (\x+0,1+\y)--(\x+1,1+\y)--(\x+1,0+\y)--(\x+0,0+\y)--(\x+0,1+\y);
}
\foreach \x in {22,...,25}
\foreach \y in {1} 
{
\draw [fill=gray!20] (\x+0,1+\y)--(\x+1,1+\y)--(\x+1,0+\y)--(\x+0,0+\y)--(\x+0,1+\y);
}

\foreach \x in {30,...,37}
\foreach \y in {3,...,7} 
{
\draw [fill=gray!20] (\x+0,1+\y)--(\x+1,1+\y)--(\x+1,0+\y)--(\x+0,0+\y)--(\x+0,1+\y);
}
\foreach \x in {30,37}
\foreach \y in {1,2} 
{
\draw [fill=gray!20] (\x+0,1+\y)--(\x+1,1+\y)--(\x+1,0+\y)--(\x+0,0+\y)--(\x+0,1+\y);
}
\foreach \x in {32,35}
\foreach \y in {1} 
{
\draw [fill=gray!20] (\x+0,1+\y)--(\x+1,1+\y)--(\x+1,0+\y)--(\x+0,0+\y)--(\x+0,1+\y);
}
\foreach \x in {30,31,32,35,36,37}
\foreach \y in {0} 
{
\draw [fill=gray!20] (\x+0,1+\y)--(\x+1,1+\y)--(\x+1,0+\y)--(\x+0,0+\y)--(\x+0,1+\y);
}

\node at (4,-1) {$1$st layer};  \node at (14,-1) {$2$nd layer};  \node at (24,-1) {$3$rd layer};  \node at (34,-1) {$4$th and 5th layers};

\node at (4,-11) {$6$th layer};  \node at (14,-11) {$7$th layer};  \node at (24,-11) {$8$th layer};


\foreach \x in {0,10,20}
\foreach \y in {-10,...,-2} 
{ 
\draw  (\x+0,0+\y)--(\x+8,0+\y);
\draw  (\x,-10)--(\x,-2);
\draw  (\x+8,-10)--(\x+8,-2);
\draw  (\x+7,-10)--(\x+7,-2);
\draw  (\x+6,-10)--(\x+6,-2);
\draw  (\x+5,-10)--(\x+5,-2);
\draw  (\x+4,-10)--(\x+4,-2);
\draw  (\x+3,-10)--(\x+3,-2);
\draw  (\x+2,-10)--(\x+2,-2);
\draw  (\x+1,-10)--(\x+1,-2);
}

\foreach \x in {0,...,7}
\foreach \y in {-10,-7,-6,-5,-4,-3} 
{
\draw [fill=gray!20] (\x+0,1+\y)--(\x+1,1+\y)--(\x+1,0+\y)--(\x+0,0+\y)--(\x+0,1+\y);
}
\foreach \x in {0,7}
\foreach \y in {-9,-8} 
{
\draw [fill=gray!20] (\x+0,1+\y)--(\x+1,1+\y)--(\x+1,0+\y)--(\x+0,0+\y)--(\x+0,1+\y);
}
\foreach \x in {2,...,5}
\foreach \y in {-9} 
{
\draw [fill=gray!20] (\x+0,1+\y)--(\x+1,1+\y)--(\x+1,0+\y)--(\x+0,0+\y)--(\x+0,1+\y);
}

\foreach \x in {10,...,17}
\foreach \y in {-10,-7,-6,-5,-4,-3} 
{
\draw [fill=gray!20] (\x+0,1+\y)--(\x+1,1+\y)--(\x+1,0+\y)--(\x+0,0+\y)--(\x+0,1+\y);
}
\foreach \x in {10,17}
\foreach \y in {-9,-8} 
{
\draw [fill=gray!20] (\x+0,1+\y)--(\x+1,1+\y)--(\x+1,0+\y)--(\x+0,0+\y)--(\x+0,1+\y);
}

\foreach \x in {20,...,27}
\foreach \y in {-10,...,-3} 
{
\draw [fill=gray!20] (\x+0,1+\y)--(\x+1,1+\y)--(\x+1,0+\y)--(\x+0,0+\y)--(\x+0,1+\y);
}

\end{tikzpicture}
\end{center}
\caption{Level-1 layer diagram of $d$, a functional cube with a dent.}\label{fig_3d_d}
\end{figure}


\begin{figure}[H]
\begin{center}
\begin{tikzpicture}[scale=0.4]

\foreach \x in {0,...,7}
\foreach \y in {0,...,8} 
{
\draw [fill=gray!20] (\x+0,1+\y)--(\x+1,1+\y)--(\x+1,0+\y)--(\x+0,0+\y)--(\x+0,1+\y);
}

\foreach \x in {10,...,17}
\foreach \y in {0,...,8} 
{
\draw [fill=gray!20] (\x+0,1+\y)--(\x+1,1+\y)--(\x+1,0+\y)--(\x+0,0+\y)--(\x+0,1+\y);
}
\foreach \x in {11,...,16}
\foreach \y in {10,11} 
{
\draw [fill=gray!20] (\x+0,1+\y)--(\x+1,1+\y)--(\x+1,0+\y)--(\x+0,0+\y)--(\x+0,1+\y);
}

\foreach \x in {20,...,27}
\foreach \y in {0,...,8} 
{
\draw [fill=gray!20] (\x+0,1+\y)--(\x+1,1+\y)--(\x+1,0+\y)--(\x+0,0+\y)--(\x+0,1+\y);
}
\foreach \x in {21,...,26}
\foreach \y in {11} 
{
\draw [fill=gray!20] (\x+0,1+\y)--(\x+1,1+\y)--(\x+1,0+\y)--(\x+0,0+\y)--(\x+0,1+\y);
}
\foreach \x in {21,26}
\foreach \y in {10} 
{
\draw [fill=gray!20] (\x+0,1+\y)--(\x+1,1+\y)--(\x+1,0+\y)--(\x+0,0+\y)--(\x+0,1+\y);
}

\foreach \x in {30,...,37}
\foreach \y in {0,...,8} 
{
\draw [fill=gray!20] (\x+0,1+\y)--(\x+1,1+\y)--(\x+1,0+\y)--(\x+0,0+\y)--(\x+0,1+\y);
}
\foreach \x in {31,...,36}
\foreach \y in {11} 
{
\draw [fill=gray!20] (\x+0,1+\y)--(\x+1,1+\y)--(\x+1,0+\y)--(\x+0,0+\y)--(\x+0,1+\y);
}
\foreach \x in {31,36}
\foreach \y in {10} 
{
\draw [fill=gray!20] (\x+0,1+\y)--(\x+1,1+\y)--(\x+1,0+\y)--(\x+0,0+\y)--(\x+0,1+\y);
}
\foreach \x in {33,34}
\foreach \y in {9,10} 
{
\draw [fill=gray!20] (\x+0,1+\y)--(\x+1,1+\y)--(\x+1,0+\y)--(\x+0,0+\y)--(\x+0,1+\y);
}

\foreach \x in {0,10,20,30}
\foreach \y in {8} 
{
\draw [color=red, thick] (\x,\y)--(\x+8,\y);
}

\node at (4,-1) {$1$st layer};  \node at (14,-1) {$2$nd layer};  \node at (24,-1) {$3$rd layer};  \node at (34,-1) {$4$th and 5th layers};

\node at (4,-15) {$6$th layer};  \node at (14,-15) {$7$th layer};  \node at (24,-15) {$8$th layer};

\foreach \x in {0,...,7}
\foreach \y in {-14,...,-6} 
{
\draw [fill=gray!20] (\x+0,1+\y)--(\x+1,1+\y)--(\x+1,0+\y)--(\x+0,0+\y)--(\x+0,1+\y);
}
\foreach \x in {1,...,6}
\foreach \y in {-3} 
{
\draw [fill=gray!20] (\x+0,1+\y)--(\x+1,1+\y)--(\x+1,0+\y)--(\x+0,0+\y)--(\x+0,1+\y);
}
\foreach \x in {1,6}
\foreach \y in {-4} 
{
\draw [fill=gray!20] (\x+0,1+\y)--(\x+1,1+\y)--(\x+1,0+\y)--(\x+0,0+\y)--(\x+0,1+\y);
}

\foreach \x in {10,...,17}
\foreach \y in {-14,...,-6} 
{
\draw [fill=gray!20] (\x+0,1+\y)--(\x+1,1+\y)--(\x+1,0+\y)--(\x+0,0+\y)--(\x+0,1+\y);
}
\foreach \x in {11,...,16}
\foreach \y in {-3,-4} 
{
\draw [fill=gray!20] (\x+0,1+\y)--(\x+1,1+\y)--(\x+1,0+\y)--(\x+0,0+\y)--(\x+0,1+\y);
}

\foreach \x in {20,...,27}
\foreach \y in {-14,...,-6} 
{
\draw [fill=gray!20] (\x+0,1+\y)--(\x+1,1+\y)--(\x+1,0+\y)--(\x+0,0+\y)--(\x+0,1+\y);
}

\foreach \x in {0,10,20}
\foreach \y in {-6} 
{
\draw [color=red, thick] (\x,\y)--(\x+8,\y);
}

\end{tikzpicture}
\end{center}
\caption{Level-1 layer diagram of $D^+$, a functional cube with a bump.}\label{fig_3d_D_plus}
\end{figure}
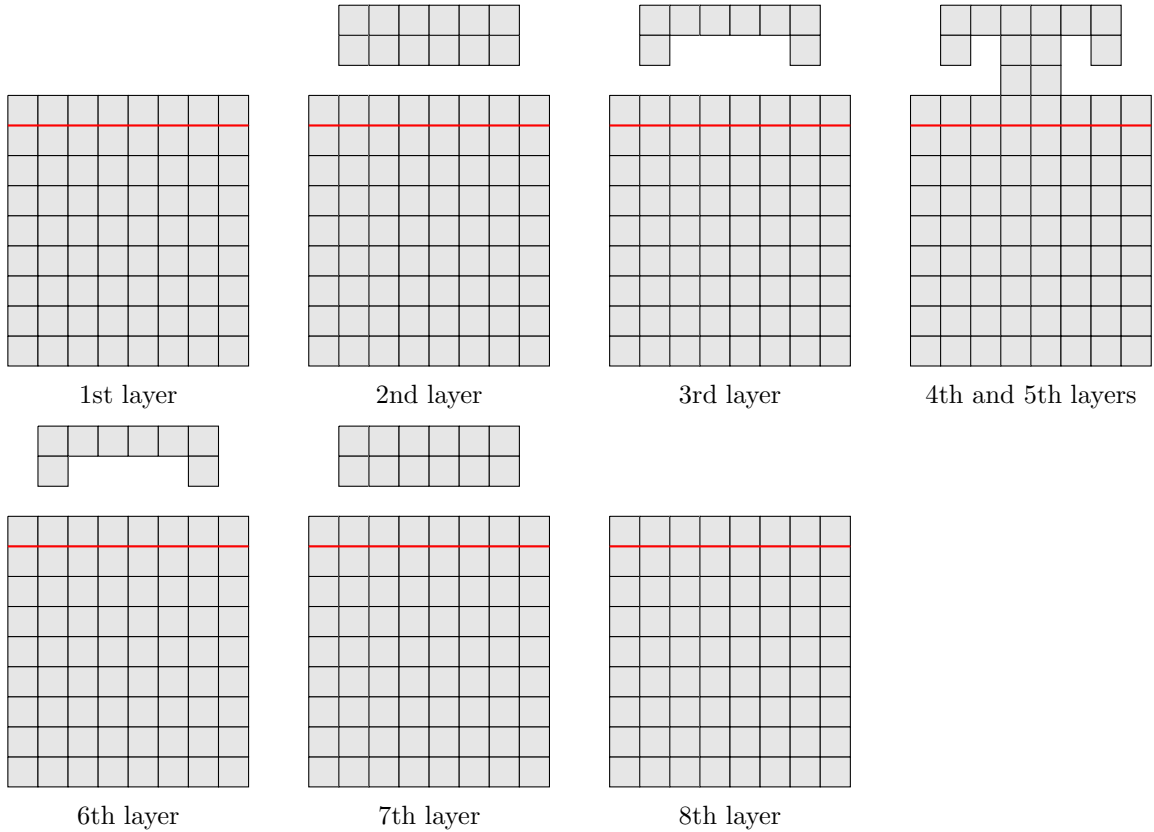

The building block $a$ and $b$ are almost the same as the building block $c$, except that the dents of $a$ and $b$ are on their east side and west side, respectively. In other words, the building block $a$ (see Figure \ref{fig_3d_a}) is obtained from $c$ by a rotation of $90^{\circ}$ clockwise about a vertical axis, and $b$ is obtained from $c$ by a rotation of $90^{\circ}$ counterclockwise about a vertical axis. Similarly, building blocks $A$ and $B$ are obtained from $C$ by a rotation of $90^{\circ}$ clockwise and counterclockwise about a vertical axis, respectively.

The two paris of building blocks: $a$ and $A$, and $b$ and $B$ are used to ensure that the encoders (which will be introduced in the next subsection) must be placed inside selectors (see next subsections too) in any tilings.

Building block $x$ (see Figure \ref{fig_3d_x}) is a functional cube with a dent on its west side. The shape of the dent of $x$ is slightly different than those of $a$, $b$ and $c$. Building block $y$ is obtained from $x$ by a rotation of $90^{\circ}$ counterclockwise about a vertical axis. So the dent of $y$ is on its south side. Building block $z$ is obtained from $x$ by a rotation of $90^{\circ}$ about a south-north axis so that the dent is on the bottom side of $z$. Building blocks $X$, $Y$ and $Z$ are functional cubes with a bump that matches the dent of $x$, $y$ and $z$, respectively.

The there pairs of building blocks: $x$ and $X$, $y$ and $Y$, and $z$ and $Z$ are used to fixed the relative positions in any tilings between the polycubes called selectors which will be introduced in the next subsection.

The last two building blocks, which are also two tiles in the set of $5$ tiles in the proof of Theorem \ref{thm_3d_new}, are illustrated in Figure \ref{fig_3d_f} and Figure \ref{fig_3d_f_plus}. They are called the \textit{standard filler} $F$ and \textit{bigger filler} $F^+$, respectively, as the latter is longer than the former in the south-north direction (plus a vertical $8\times 1\times 8$  layer in the middle).


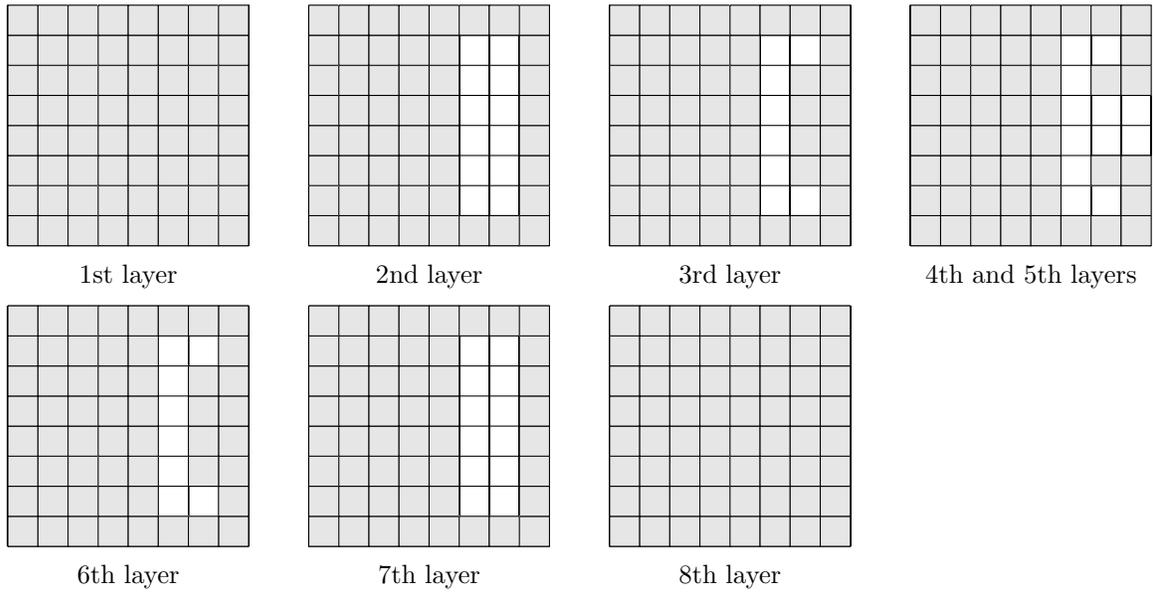
\begin{figure}[H]
\begin{center}
\begin{tikzpicture}[scale=0.4]

\foreach \x in {0,10,20,30}
\foreach \y in {0,...,8} 
{ 
\draw  (\x+0,0+\y)--(\x+8,0+\y);
\draw  (\x,0)--(\x,8);
\draw  (\x+8,0)--(\x+8,8);
\draw  (\x+7,0)--(\x+7,8);
\draw  (\x+6,0)--(\x+6,8);
\draw  (\x+5,0)--(\x+5,8);
\draw  (\x+4,0)--(\x+4,8);
\draw  (\x+3,0)--(\x+3,8);
\draw  (\x+2,0)--(\x+2,8);
\draw  (\x+1,0)--(\x+1,8);
}

\foreach \x in {0,...,7}
\foreach \y in {0,...,7} 
{
\draw [fill=gray!20] (\x+0,1+\y)--(\x+1,1+\y)--(\x+1,0+\y)--(\x+0,0+\y)--(\x+0,1+\y);
}

\foreach \x in {10,11,12,13,14,17}
\foreach \y in {0,...,7} 
{
\draw [fill=gray!20] (\x+0,1+\y)--(\x+1,1+\y)--(\x+1,0+\y)--(\x+0,0+\y)--(\x+0,1+\y);
}
\foreach \x in {15,16}
\foreach \y in {0,7} 
{
\draw [fill=gray!20] (\x+0,1+\y)--(\x+1,1+\y)--(\x+1,0+\y)--(\x+0,0+\y)--(\x+0,1+\y);
}

\foreach \x in {20,21,22,23,24,27}
\foreach \y in {0,...,7} 
{
\draw [fill=gray!20] (\x+0,1+\y)--(\x+1,1+\y)--(\x+1,0+\y)--(\x+0,0+\y)--(\x+0,1+\y);
}
\foreach \x in {25,26}
\foreach \y in {0,7} 
{
\draw [fill=gray!20] (\x+0,1+\y)--(\x+1,1+\y)--(\x+1,0+\y)--(\x+0,0+\y)--(\x+0,1+\y);
}
\foreach \x in {26}
\foreach \y in {2,...,5} 
{
\draw [fill=gray!20] (\x+0,1+\y)--(\x+1,1+\y)--(\x+1,0+\y)--(\x+0,0+\y)--(\x+0,1+\y);
}

\foreach \x in {30,31,32,33,34}
\foreach \y in {0,...,7} 
{
\draw [fill=gray!20] (\x+0,1+\y)--(\x+1,1+\y)--(\x+1,0+\y)--(\x+0,0+\y)--(\x+0,1+\y);
}
\foreach \x in {35,36}
\foreach \y in {0,7} 
{
\draw [fill=gray!20] (\x+0,1+\y)--(\x+1,1+\y)--(\x+1,0+\y)--(\x+0,0+\y)--(\x+0,1+\y);
}
\foreach \x in {36}
\foreach \y in {2,5} 
{
\draw [fill=gray!20] (\x+0,1+\y)--(\x+1,1+\y)--(\x+1,0+\y)--(\x+0,0+\y)--(\x+0,1+\y);
}
\foreach \x in {37}
\foreach \y in {0,1,2,5,6,7} 
{
\draw [fill=gray!20] (\x+0,1+\y)--(\x+1,1+\y)--(\x+1,0+\y)--(\x+0,0+\y)--(\x+0,1+\y);
}

\node at (4,-1) {$1$st layer};  \node at (14,-1) {$2$nd layer};  \node at (24,-1) {$3$rd layer};  \node at (34,-1) {$4$th and 5th layers};

\node at (4,-11) {$6$th layer};  \node at (14,-11) {$7$th layer};  \node at (24,-11) {$8$th layer};


\foreach \x in {0,10,20}
\foreach \y in {-10,...,-2} 
{ 
\draw  (\x+0,0+\y)--(\x+8,0+\y);
\draw  (\x,-10)--(\x,-2);
\draw  (\x+8,-10)--(\x+8,-2);
\draw  (\x+7,-10)--(\x+7,-2);
\draw  (\x+6,-10)--(\x+6,-2);
\draw  (\x+5,-10)--(\x+5,-2);
\draw  (\x+4,-10)--(\x+4,-2);
\draw  (\x+3,-10)--(\x+3,-2);
\draw  (\x+2,-10)--(\x+2,-2);
\draw  (\x+1,-10)--(\x+1,-2);
}

\foreach \x in {0,1,2,3,4,7}
\foreach \y in {-10,...,-3} 
{
\draw [fill=gray!20] (\x+0,1+\y)--(\x+1,1+\y)--(\x+1,0+\y)--(\x+0,0+\y)--(\x+0,1+\y);
}
\foreach \x in {5,6}
\foreach \y in {-3,-10} 
{
\draw [fill=gray!20] (\x+0,1+\y)--(\x+1,1+\y)--(\x+1,0+\y)--(\x+0,0+\y)--(\x+0,1+\y);
}
\foreach \x in {6}
\foreach \y in {-8,...,-5} 
{
\draw [fill=gray!20] (\x+0,1+\y)--(\x+1,1+\y)--(\x+1,0+\y)--(\x+0,0+\y)--(\x+0,1+\y);
}

\foreach \x in {10,11,12,13,14,17}
\foreach \y in {-10,...,-3} 
{
\draw [fill=gray!20] (\x+0,1+\y)--(\x+1,1+\y)--(\x+1,0+\y)--(\x+0,0+\y)--(\x+0,1+\y);
}
\foreach \x in {15,16}
\foreach \y in {-3,-10} 
{
\draw [fill=gray!20] (\x+0,1+\y)--(\x+1,1+\y)--(\x+1,0+\y)--(\x+0,0+\y)--(\x+0,1+\y);
}

\foreach \x in {20,...,27}
\foreach \y in {-10,...,-3} 
{
\draw [fill=gray!20] (\x+0,1+\y)--(\x+1,1+\y)--(\x+1,0+\y)--(\x+0,0+\y)--(\x+0,1+\y);
}

\end{tikzpicture}
\end{center}
\caption{Level-1 layer diagram of $a$, a functional cube with a dent.}\label{fig_3d_a}
\end{figure}


\begin{figure}[H]
\begin{center}
\begin{tikzpicture}[scale=0.4]

\foreach \x in {0,10,20,30}
\foreach \y in {0,...,8} 
{ 
\draw  (\x+0,0+\y)--(\x+8,0+\y);
\draw  (\x,0)--(\x,8);
\draw  (\x+8,0)--(\x+8,8);
\draw  (\x+7,0)--(\x+7,8);
\draw  (\x+6,0)--(\x+6,8);
\draw  (\x+5,0)--(\x+5,8);
\draw  (\x+4,0)--(\x+4,8);
\draw  (\x+3,0)--(\x+3,8);
\draw  (\x+2,0)--(\x+2,8);
\draw  (\x+1,0)--(\x+1,8);
}

\foreach \x in {0,...,7}
\foreach \y in {0,...,7} 
{
\draw [fill=gray!20] (\x+0,1+\y)--(\x+1,1+\y)--(\x+1,0+\y)--(\x+0,0+\y)--(\x+0,1+\y);
}

\foreach \x in {10,13,14,15,16,17}
\foreach \y in {0,...,7} 
{
\draw [fill=gray!20] (\x+0,1+\y)--(\x+1,1+\y)--(\x+1,0+\y)--(\x+0,0+\y)--(\x+0,1+\y);
}
\foreach \x in {11,12}
\foreach \y in {0,7} 
{
\draw [fill=gray!20] (\x+0,1+\y)--(\x+1,1+\y)--(\x+1,0+\y)--(\x+0,0+\y)--(\x+0,1+\y);
}

\foreach \x in {20,23,24,25,26,27}
\foreach \y in {0,...,7} 
{
\draw [fill=gray!20] (\x+0,1+\y)--(\x+1,1+\y)--(\x+1,0+\y)--(\x+0,0+\y)--(\x+0,1+\y);
}
\foreach \x in {21,22}
\foreach \y in {0,7} 
{
\draw [fill=gray!20] (\x+0,1+\y)--(\x+1,1+\y)--(\x+1,0+\y)--(\x+0,0+\y)--(\x+0,1+\y);
}
\foreach \x in {21}
\foreach \y in {2,...,5} 
{
\draw [fill=gray!20] (\x+0,1+\y)--(\x+1,1+\y)--(\x+1,0+\y)--(\x+0,0+\y)--(\x+0,1+\y);
}

\foreach \x in {33,34,35,36,37}
\foreach \y in {0,...,7} 
{
\draw [fill=gray!20] (\x+0,1+\y)--(\x+1,1+\y)--(\x+1,0+\y)--(\x+0,0+\y)--(\x+0,1+\y);
}
\foreach \x in {31,32}
\foreach \y in {0,7} 
{
\draw [fill=gray!20] (\x+0,1+\y)--(\x+1,1+\y)--(\x+1,0+\y)--(\x+0,0+\y)--(\x+0,1+\y);
}
\foreach \x in {31}
\foreach \y in {2,3,5} 
{
\draw [fill=gray!20] (\x+0,1+\y)--(\x+1,1+\y)--(\x+1,0+\y)--(\x+0,0+\y)--(\x+0,1+\y);
}
\foreach \x in {30}
\foreach \y in {0,1,2,3,5,6,7} 
{
\draw [fill=gray!20] (\x+0,1+\y)--(\x+1,1+\y)--(\x+1,0+\y)--(\x+0,0+\y)--(\x+0,1+\y);
}

\node at (4,-1) {$1$st and $8$th layer};  \node at (14,-1) {$2$nd and $7$th layer};  \node at (24,-1) {$3$rd and $6$th layer};  \node at (34,-1) {$4$th and 5th layers};

\end{tikzpicture}
\end{center}
\caption{Level-1 layer diagram of $x$, a functional cube with a dent.}\label{fig_3d_x}
\end{figure}


\begin{figure}[H]
\begin{center}
\begin{tikzpicture}[scale=0.4]

\foreach \x in {1,...,6}
\foreach \y in {1,2,5,6} 
{
\draw [fill=gray!20] (\x+0,1+\y)--(\x+1,1+\y)--(\x+1,0+\y)--(\x+0,0+\y)--(\x+0,1+\y);
}

\foreach \x in {11,...,16}
\foreach \y in {1,6} 
{
\draw [fill=gray!20] (\x+0,1+\y)--(\x+1,1+\y)--(\x+1,0+\y)--(\x+0,0+\y)--(\x+0,1+\y);
}
\foreach \x in {11,16}
\foreach \y in {2,5} 
{
\draw [fill=gray!20] (\x+0,1+\y)--(\x+1,1+\y)--(\x+1,0+\y)--(\x+0,0+\y)--(\x+0,1+\y);
}

\foreach \x in {21,...,26}
\foreach \y in {1,6} 
{
\draw [fill=gray!20] (\x+0,1+\y)--(\x+1,1+\y)--(\x+1,0+\y)--(\x+0,0+\y)--(\x+0,1+\y);
}
\foreach \x in {21,26}
\foreach \y in {2,5} 
{
\draw [fill=gray!20] (\x+0,1+\y)--(\x+1,1+\y)--(\x+1,0+\y)--(\x+0,0+\y)--(\x+0,1+\y);
}
\foreach \x in {23,24}
\foreach \y in {2,...,5} 
{
\draw [fill=gray!20] (\x+0,1+\y)--(\x+1,1+\y)--(\x+1,0+\y)--(\x+0,0+\y)--(\x+0,1+\y);
}

\node at (4,0) {$1$st and $6$th layer};  \node at (14,0) {$2$nd and $5$th layer};  \node at (24,0) {$3$rd and $4$th layer};

\end{tikzpicture}
\end{center}
\caption{Level-1 layer diagram of the standard filler $F$.}\label{fig_3d_f}
\end{figure}
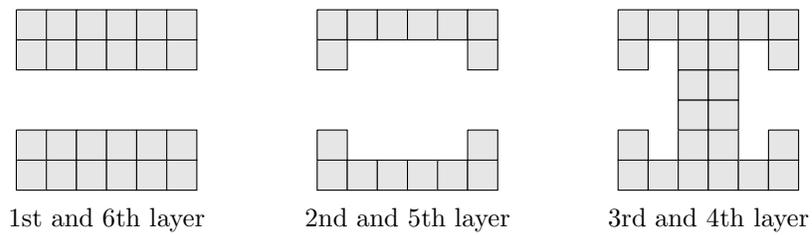


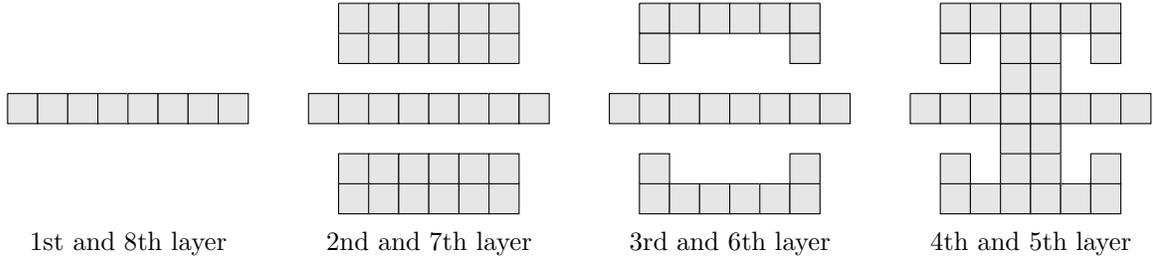
\begin{figure}[H]
\begin{center}
\begin{tikzpicture}[scale=0.4]

\foreach \x in {1,...,6}
\foreach \y in {0,1,5,6} 
{
\draw [fill=gray!20] (\x+0,1+\y)--(\x+1,1+\y)--(\x+1,0+\y)--(\x+0,0+\y)--(\x+0,1+\y);
}

\foreach \x in {11,...,16}
\foreach \y in {0,6} 
{
\draw [fill=gray!20] (\x+0,1+\y)--(\x+1,1+\y)--(\x+1,0+\y)--(\x+0,0+\y)--(\x+0,1+\y);
}
\foreach \x in {11,16}
\foreach \y in {1,5} 
{
\draw [fill=gray!20] (\x+0,1+\y)--(\x+1,1+\y)--(\x+1,0+\y)--(\x+0,0+\y)--(\x+0,1+\y);
}

\foreach \x in {21,...,26}
\foreach \y in {0,6} 
{
\draw [fill=gray!20] (\x+0,1+\y)--(\x+1,1+\y)--(\x+1,0+\y)--(\x+0,0+\y)--(\x+0,1+\y);
}
\foreach \x in {21,26}
\foreach \y in {1,5} 
{
\draw [fill=gray!20] (\x+0,1+\y)--(\x+1,1+\y)--(\x+1,0+\y)--(\x+0,0+\y)--(\x+0,1+\y);
}
\foreach \x in {23,24}
\foreach \y in {1,...,5} 
{
\draw [fill=gray!20] (\x+0,1+\y)--(\x+1,1+\y)--(\x+1,0+\y)--(\x+0,0+\y)--(\x+0,1+\y);
}

\node at (4,-1) {$2$nd and $7$th layer};  \node at (14,-1) {$3$rd and $6$th layer};  \node at (24,-1) {$4$th and $5$th layer};

\node at (-6,-1) {$1$st and $8$th layer};  
\foreach \x in {-10,...,-3}
\foreach \y in {3} 
{
\draw [fill=gray!20] (\x+0,1+\y)--(\x+1,1+\y)--(\x+1,0+\y)--(\x+0,0+\y)--(\x+0,1+\y);
}

\foreach \x in {0,...,7}
\foreach \y in {3} 
{
\draw [fill=gray!20] (\x+0,1+\y)--(\x+1,1+\y)--(\x+1,0+\y)--(\x+0,0+\y)--(\x+0,1+\y);
}

\foreach \x in {10,...,17}
\foreach \y in {3} 
{
\draw [fill=gray!20] (\x+0,1+\y)--(\x+1,1+\y)--(\x+1,0+\y)--(\x+0,0+\y)--(\x+0,1+\y);
}

\foreach \x in {20,21,22,25,26,27}
\foreach \y in {3} 
{
\draw [fill=gray!20] (\x+0,1+\y)--(\x+1,1+\y)--(\x+1,0+\y)--(\x+0,0+\y)--(\x+0,1+\y);
}

\end{tikzpicture}
\end{center}
\caption{Level-1 layer diagram of the bigger filler $F^+$.}\label{fig_3d_f_plus}
\end{figure}

\subsection{The Set of $5$ Polycubes}

Recall that all the layer diagrams in the previous subsection are level-1, where each gray square represents a $1\times 1\times 1$ unit cube. To illustrate the set of $5$ polycubes, all layer diagrams in this subsection are \textit{level-2}, where each gray square represents a building block (i.e. a $8\times 8\times 8$ functional cube, or a functional cube with a dent or/and a bump).

We take the set of $3$ Wang tiles illustrated in Figure \ref{fig_w3} as an example to introduce the method to construct a corresponding set of $5$ polycubes. Two of them, the standard filler $F$ and bigger filler $F^+$, are already introduced in previous subsection. The remain three tiles are described in this subsection.

All the Wang tiles are simulated in a single polycube called \textit{encoder} as illustrated in Figure \ref{fig_encoder}. Each Wang tile is simulated in one layer (of level-2), so the encoder in Figure \ref{fig_encoder} has $3$ layers which simulate the $3$ Wang tiles in Figure \ref{fig_w3}, respectively. For the example in Figure \ref{fig_w3}, the colors of Wang tiles can be encoded by $2$-bit binary strings, as there are $4$ colors in total. Let the colors red, green, blue, and yellow be encoded in binary strings 00, 01, 10, and 11, respectively. On the north side of a layer, 0 and 1 are represented by building blocks $c$ and $c^-$, respectively. On the south side of a layer, 0 and 1 are represented by $d^-$ and $d$, respectively. In other words, $0$ is represented by $c$ on the north and $d^-$ on the south, and $1$ is represented by $c^-$ on the north and $d$ on the south. In general, to simulate a set of $p$ Wang tiles with $q$ different colors, we construct an encoder with $p$ layers, with each layer consisting of $(2t+2)\times (2t+2)$ building blocks, where $t=\lceil \log_2 q\rceil$.


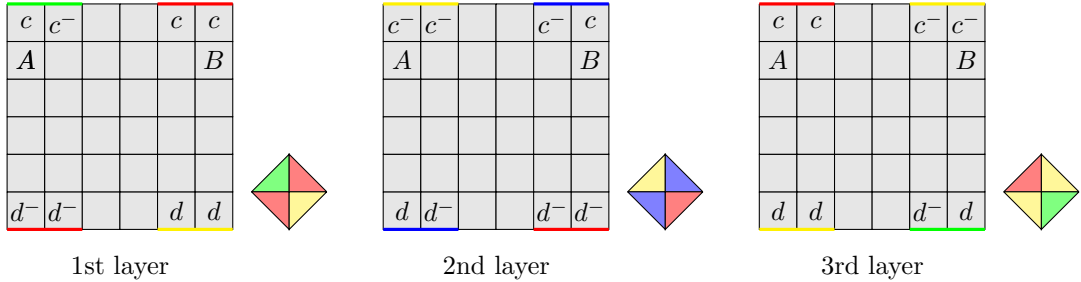
\begin{figure}[H]
\begin{center}
\begin{tikzpicture}[scale=0.5]

\foreach \x in {0,...,5}
\foreach \y in {0,...,5} 
{
\draw [ fill=gray!20] (\x+0,1+\y)--(\x+1,1+\y)--(\x+1,0+\y)--(\x+0,0+\y)--(\x+0,1+\y);
}

\foreach \x in {10,...,15}
\foreach \y in {0,...,5} 
{
\draw [ fill=gray!20] (\x+0,1+\y)--(\x+1,1+\y)--(\x+1,0+\y)--(\x+0,0+\y)--(\x+0,1+\y);
}

\foreach \x in {20,...,25}
\foreach \y in {0,...,5} 
{
\draw [ fill=gray!20] (\x+0,1+\y)--(\x+1,1+\y)--(\x+1,0+\y)--(\x+0,0+\y)--(\x+0,1+\y);
}

\foreach \x in {0,4,5,15,20,21}
\foreach \y in {5} 
{
\node at (\x+0.5,\y+0.5) {$c$};
}
\foreach \x in {1,10,11,14,24,25}
\foreach \y in {5} 
{
\node at (\x+0.5,\y+0.5) {$c^-$};
}

\foreach \x in {4,5,10,20,21,25}
\foreach \y in {0} 
{
\node at (\x+0.5,\y+0.5) {$d$};
}

\foreach \x in {0,1,11,14,15,24}
\foreach \y in {0} 
{
\node at (\x+0.5,\y+0.5) {$d^-$};
}

\foreach \x in {0,10,,20}
\foreach \y in {4} 
{
\node at (\x+0.5,\y+0.5) {$A$};
}
\foreach \x in {5,15,25}
\foreach \y in {4} 
{
\node at (\x+0.5,\y+0.5) {$B$};
}

\node at (3,-1) {$1$st layer};  \node at (13,-1) {$2$nd layer};  \node at (23,-1) {$3$rd layer};   

\draw [very thick, color=green] (0,6)--(2,6); 
\draw [very thick, color=green] (24,0)--(26,0);

\draw [very thick, color=red] (4,6)--(6,6); 
\draw [very thick, color=red] (0,0)--(2,0); 
\draw [very thick, color=red] (14,0)--(16,0); 
\draw [very thick, color=red] (20,6)--(22,6);

\draw [very thick, color=blue] (14,6)--(16,6); 
\draw [very thick, color=blue] (10,0)--(12,0);

\draw [very thick, color=yellow] (24,6)--(26,6); 
\draw [very thick, color=yellow] (20,0)--(22,0); 
\draw [very thick, color=yellow] (10,6)--(12,6); 
\draw [very thick, color=yellow] (4,0)--(6,0);

\draw [fill=green!50] (7-0.5,1)--(8-0.5,1)--(8-0.5,2)--(7-0.5,1);
\draw [fill=red!50] (9-0.5,1)--(8-0.5,1)--(8-0.5,2)--(9-0.5,1);
\draw [fill=red!50] (7-0.5,1)--(8-0.5,1)--(8-0.5,0)--(7-0.5,1);
\draw [fill=yellow!50] (9-0.5,1)--(8-0.5,1)--(8-0.5,0)--(9-0.5,1);

\draw [fill=yellow!50] (17-0.5,1)--(18-0.5,1)--(18-0.5,2)--(17-0.5,1);
\draw [fill=blue!50] (19-0.5,1)--(18-0.5,1)--(18-0.5,2)--(19-0.5,1);
\draw [fill=blue!50] (17-0.5,1)--(18-0.5,1)--(18-0.5,0)--(17-0.5,1);
\draw [fill=red!50] (19-0.5,1)--(18-0.5,1)--(18-0.5,0)--(19-0.5,1);

\draw [fill=red!50] (27-0.5,1)--(28-0.5,1)--(28-0.5,2)--(27-0.5,1);
\draw [fill=yellow!50] (29-0.5,1)--(28-0.5,1)--(28-0.5,2)--(29-0.5,1);
\draw [fill=yellow!50] (27-0.5,1)--(28-0.5,1)--(28-0.5,0)--(27-0.5,1);
\draw [fill=green!50] (29-0.5,1)--(28-0.5,1)--(28-0.5,0)--(29-0.5,1);

\end{tikzpicture}
\end{center}
\caption{Level-$2$ layer diagram of the encoder.}\label{fig_encoder}
\end{figure}

Besides the building blocks $c$, $c^-$, $d$, and $d^-$, there are also building blocks $A$ and $B$ on each layer of the encoder. The building blocks $A$ and $B$ ensure that the encoder must be placed inside the selector which will be introduced in the next paragraph.

The \textit{selector} is illustrated in Figure \ref{fig_selector}. Intuitively speaking, the selector is like a vertical well with the same number of layers as an encoder. There is a building block $z$ on the northeast corner of the first layer and a building block $Z$ on the northeast corner of the third layer of the selector. The unique shape of the dent of $z$ and bump of $Z$ will guarantee another selector to be placed above or below a selector in any tiling. Thus, they form a two-way infinite vertical well. There are also building blocks $x$, $X$, $y$, and $Y$ on the first layer of the selector. The building blocks $x$ and $X$ determine the relative position of adjacent selectors in the east-west direction in any tiling of the space. The building blocks $y$ and $Y$ determine the relative position of adjacent selectors in the south-north direction. Together, the selectors must form a pattern as illustrated later in Figure \ref{fig_3d_pattern} and Figure \ref{fig_3d_pattern_2} for every horizontal layers.


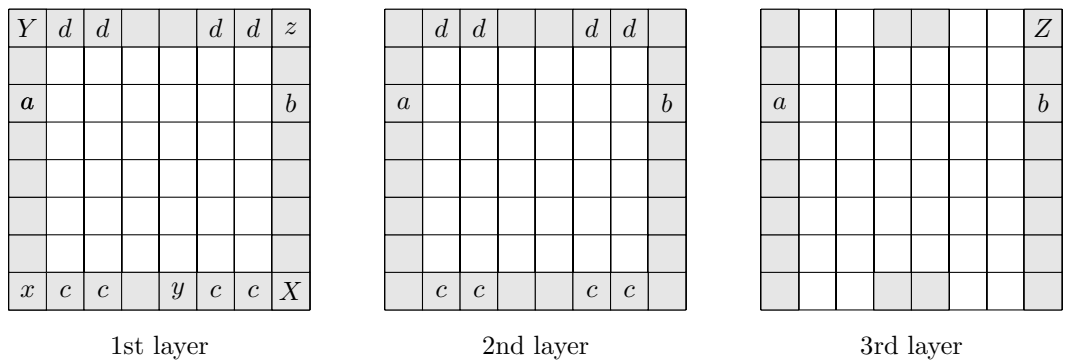
\begin{figure}[H]
\begin{center}
\begin{tikzpicture}[scale=0.5]

\foreach \x in {0,10,20}
\foreach \y in {0,...,8} 
{ 
\draw   (\x+0,0+\y)--(\x+8,0+\y);
\draw  (\x,0)--(\x,8);
\draw   (\x+8,0)--(\x+8,8);
\draw   (\x+7,0)--(\x+7,8);
\draw   (\x+6,0)--(\x+6,8);
\draw   (\x+5,0)--(\x+5,8);
\draw   (\x+4,0)--(\x+4,8);
\draw   (\x+3,0)--(\x+3,8);
\draw   (\x+2,0)--(\x+2,8);
\draw   (\x+1,0)--(\x+1,8);
}

\foreach \x in {0,...,7}
\foreach \y in {0,7} 
{
\draw [ fill=gray!20] (\x+0,1+\y)--(\x+1,1+\y)--(\x+1,0+\y)--(\x+0,0+\y)--(\x+0,1+\y);
}
\foreach \x in {0,7}
\foreach \y in {0,...,7} 
{
\draw [ fill=gray!20] (\x+0,1+\y)--(\x+1,1+\y)--(\x+1,0+\y)--(\x+0,0+\y)--(\x+0,1+\y);
}

\foreach \x in {10,17}
\foreach \y in {0,...,7} 
{
\draw [ fill=gray!20] (\x+0,1+\y)--(\x+1,1+\y)--(\x+1,0+\y)--(\x+0,0+\y)--(\x+0,1+\y);
}
\foreach \x in {11,...,16}
\foreach \y in {0,7} 
{
\draw [ fill=gray!20] (\x+0,1+\y)--(\x+1,1+\y)--(\x+1,0+\y)--(\x+0,0+\y)--(\x+0,1+\y);
}

\foreach \x in {20,27}
\foreach \y in {0,...,7} 
{
\draw [ fill=gray!20] (\x+0,1+\y)--(\x+1,1+\y)--(\x+1,0+\y)--(\x+0,0+\y)--(\x+0,1+\y);
}
\foreach \x in {23,24}
\foreach \y in {0,7} 
{
\draw [ fill=gray!20] (\x+0,1+\y)--(\x+1,1+\y)--(\x+1,0+\y)--(\x+0,0+\y)--(\x+0,1+\y);
}

\foreach \x in {7}
\foreach \y in {0} 
{
\node at (\x+0.5,\y+0.5) {$X$};
}
\foreach \x in {0}
\foreach \y in {0} 
{
\node at (\x+0.5,\y+0.5) {$x$};
}

\foreach \x in {0}
\foreach \y in {7} 
{
\node at (\x+0.5,\y+0.5) {$Y$};
}
\foreach \x in {4}
\foreach \y in {0} 
{
\node at (\x+0.5,\y+0.5) {$y$};
}

\foreach \x in {7}
\foreach \y in {7} 
{
\node at (\x+0.5,\y+0.5) {$z$};
}
\foreach \x in {27}
\foreach \y in {7} 
{
\node at (\x+0.5,\y+0.5) {$Z$};
}

\foreach \x in {0,10,,20}
\foreach \y in {5} 
{
\node at (\x+0.5,\y+0.5) {$a$};
}
\foreach \x in {7,17,27}
\foreach \y in {5} 
{
\node at (\x+0.5,\y+0.5) {$b$};
}

\foreach \x in {1,2,5,6,11,12,15,16}
\foreach \y in {0} 
{
\node at (\x+0.5,\y+0.5) {$c$};
}

\foreach \x in {1,2,5,6,11,12,15,16}
\foreach \y in {7} 
{
\node at (\x+0.5,\y+0.5) {$d$};
}

\node at (4,-1) {$1$st layer};  \node at (14,-1) {$2$nd layer};  \node at (24,-1) {$3$rd layer};

\end{tikzpicture}
\end{center}
\caption{Level-$2$ layer diagram of the selector.}\label{fig_selector}
\end{figure}

The building blocks $a$ and $b$ appear on every layer of the selector. They can only be matched by the bumps of $A$ and $B$ which appear exclusively on the encoder. Therefore, the encoders must be placed inside the vertical wells formed by selectors.

On the first and second layers of the selector, there are building blocks $c$ or $d$ which will be adjacent to the north or south sides of the encoding blocks (i.e. $c$, $c^-$, $d$ or $d^-$) of the encoder after they are placed inside the wells of selectors. Therefore, there are many isolated tiny vacant holes between the first two layers of selectors and the encoders. These holes can be filled exactly by the standard filler $F$ or bigger filler $F^+$ introduced in the previous subsection. On the third layer of the selector, the locations that are supposed to meet the encoding blocks of the encoder to the north or to the south are vacant. So these are the only windows for a simulated Wang tile of the encoder to see outside through the selector. 

In general, the size of the selector will change in a similar way as the encoder, with respect to the number of tiles and colors of a set of Wang tiles. To be precise, for a set of $p$ Wang tiles with $q$ colors ($t = \lceil \log_2 q \rceil$), the outer boundary of every layer is of size $(2t+4)\times (2t+4)$ (count by number of building blocks), and the selector has $p$ layers.

Finally, the simulated Wang tiles that are exposed outside the selectors will be connected by the \textit{linker} as illustrated in Figure \ref{fig_linker}. The linker consists of just two building blocks: a $D^+$ on the north and a $C$ on the south. In other words, the linker is a $8\times 17\times 8$ polycube with bumps on both north and south sides. Just like the fillers, its size is fixed and does not increase with the number of tiles or colors of a set of Wang tiles. 


\begin{figure}[H]
\begin{center}
\begin{tikzpicture}[scale=0.5]

\foreach \x in {0}
\foreach \y in {0,1} 
{
\draw [ fill=gray!20] (\x+0,1+\y)--(\x+1,1+\y)--(\x+1,0+\y)--(\x+0,0+\y)--(\x+0,1+\y);
}

\foreach \x in {0}
\foreach \y in {0} 
{
\node at (\x+0.5,\y+0.5) {$C$};
}
\foreach \x in {0}
\foreach \y in {1} 
{
\node at (\x+0.5,\y+0.5) {$D^+$};
}


\end{tikzpicture}
\end{center}
\caption{Level-$2$ layer diagram of the linker.}\label{fig_linker}
\end{figure}
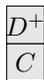

In summary, we have constructed a set of $5$ polycubes: an encoder, a selector, a linker and two fillers.

\subsection{Proof of Theorem \ref{thm_3d_new}}

\begin{proof}[Proof of Theorem \ref{thm_3d_new}]
We prove by reduction from Wang's domino problem which is known to be undecidable by Theorem \ref{thm_berger}. For each instance of Wang's domino problem, we construct a set of $5$ polycubes as described in the previous subsection. It remains to show that the set of $5$ polycubes can tile the $3$-dimensional space if and only if the corresponding set of Wang tile can tile the plane. To this end, We will first show that to tile the space with the set of $5$ polycubes, we must follow the patterns in Figure \ref{fig_3d_pattern} and Figure \ref{fig_3d_pattern_2} for each horizontal layers (of level-2).


\begin{figure}[H]
\begin{center}
\begin{tikzpicture}[scale=0.5]

\foreach \x in {0,8}
\foreach \y in {0}
{
\draw [ fill=gray!20] (\x+0,0+\y)--(\x+0,8+\y)--(\x+8,8+\y)--(\x+8,0+\y)--(\x+0,0+\y);
\draw [ fill=orange!20] (\x+1,1+\y)--(\x+1,7+\y)--(\x+7,7+\y)--(\x+7,1+\y)--(\x+1,1+\y);
\node at (\x+0.5,\y+0.5) {$x$}; \node at (\x+7.5,\y+0.5) {$X$};
\node at (\x+4.5,\y+0.5) {$y$}; \node at (\x+0.5,\y+7.5) {$Y$};
}
\foreach \x in {-4,4,12}
\foreach \y in {-8}
{
\draw [ fill=gray!20] (\x+0,0+\y)--(\x+0,8+\y)--(\x+8,8+\y)--(\x+8,0+\y)--(\x+0,0+\y);
\draw [ fill=orange!20] (\x+1,1+\y)--(\x+1,7+\y)--(\x+7,7+\y)--(\x+7,1+\y)--(\x+1,1+\y);
\node at (\x+0.5,\y+0.5) {$x$}; \node at (\x+7.5,\y+0.5) {$X$};
\node at (\x+4.5,\y+0.5) {$y$}; \node at (\x+0.5,\y+7.5) {$Y$};
}
\foreach \x in {0,8}
\foreach \y in {-16}
{
\draw [ fill=gray!20] (\x+0,0+\y)--(\x+0,8+\y)--(\x+8,8+\y)--(\x+8,0+\y)--(\x+0,0+\y);
\draw [ fill=orange!20] (\x+1,1+\y)--(\x+1,7+\y)--(\x+7,7+\y)--(\x+7,1+\y)--(\x+1,1+\y);
\node at (\x+0.5,\y+0.5) {$x$}; \node at (\x+7.5,\y+0.5) {$X$};
\node at (\x+4.5,\y+0.5) {$y$}; \node at (\x+0.5,\y+7.5) {$Y$};
}

\end{tikzpicture}
\end{center}
\caption{The tiling pattern (first and second layers of selectors, ignore the labeled building blocks for the second layer).}\label{fig_3d_pattern}
\end{figure}
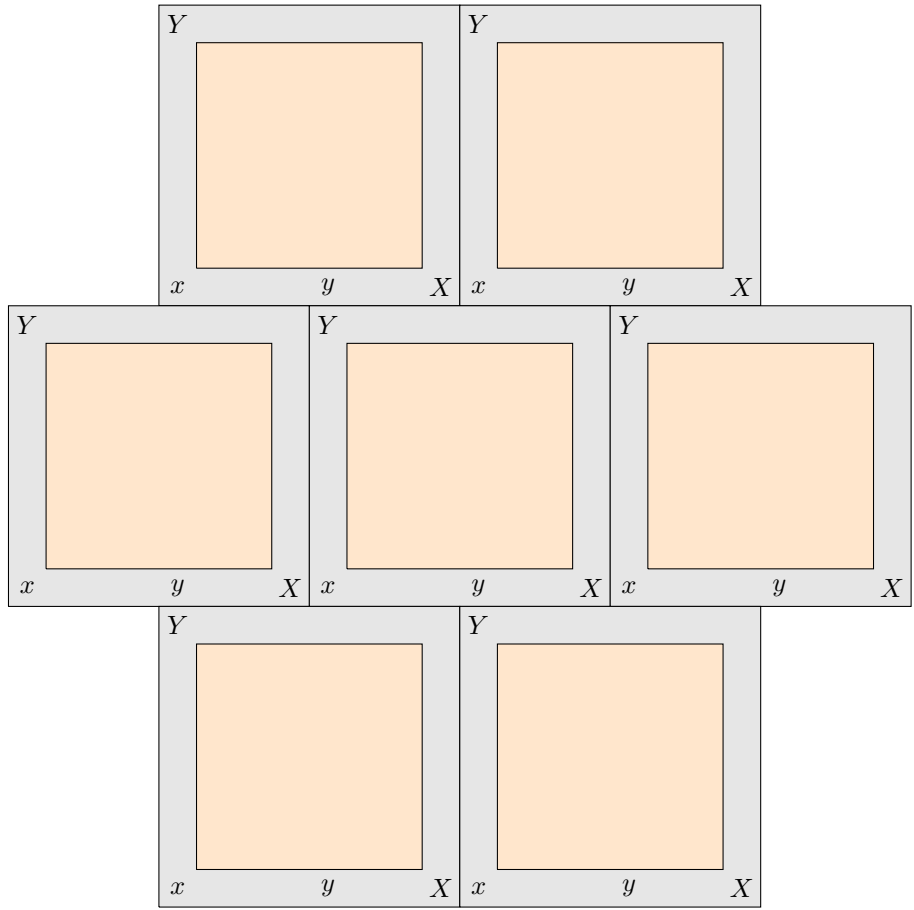

\begin{itemize}
    \item First of all, the selector must be used in any tiling of the set of $5$ polycubes. If the encoder is used, then the selector must be used, because the only building blocks that can match $A$ or $B$ are the building blocks $a$ or $b$ in the selector. If the linker or the fillers are used, then to match the bumps in the linker or the fillers, either the selector or the encoder must be used. In all cases, the selector must be used in order to tile the $3$-dimensional space.
    \item The selectors must form a $3$-dimensional lattice in any tilings. Because the building blocks $x$, $X$, $y$, $Y$, $z$, and $Z$ determine the tiling patterns of the selectors. As we have mentioned before, the selectors must form two-way infinite wells vertically in any tilings. Furthermore, the first layer and second layer of the selectors must form horizontal lattices as illustrated in Figure \ref{fig_3d_pattern}, and the third layer of the selectors must form horizontal lattices as illustrated in Figure \ref{fig_3d_pattern_2}. So the selectors are all aligned horizontally. In other words, the first layer (second layer, third layer, resp.) must be on the same altitude of the first layer (second layer or third layer, resp.) of another selector.
    \item Because of existence of the building blocks in the selectors, a two-way infinite array of vertical encoders must be placed inside each vertical well of selectors. Note that there cannot be a gap between two consecutive encoders in the vertical array, as there is nothing else that can fill such a gap. But there is a freedom to choose which of three simulated Wang tiles to be aligned with the third layer of the selectors, and the choice is independent for each vertical arrays of encoders. Therefore, we can also think of the tilings as the selectors select a simulated Wang tile independently for each vertical well to be exposed at the third layers.
    \item Finally, we need to fill up all the gaps left behind by the selectors and encoders. The small gaps (i.e. between the building blocks $c$ and $d$, $c^-$ and $d$, or $c$ and $d^-$ on the first or second layers) inside the selectors can be filled by either the standard filler $F$ or the bigger filler $F^+$. For the gaps between the encoders (through the selectors) on the third layer of the selector (see Figure \ref{fig_3d_pattern_2}), the only hope is to fill with linkers. By the length of the linker, it is easy to check the gaps between two simulated Wang tiles can be filled without gaps or overlaps by linkers if and only if the linkers are connecting a building block $c$ to a building block $d^-$, or $c^-$ to $d$. By the encoding method of the colors of Wang tiles, this is equivalent to that the two adjacent sides of the simulated Wang tiles are of the same color. Therefore, the third layers can be tiled without gaps or overlaps if and only if the simulated Wang tiles form a plane tiling.
    \item Note that the tiling in a third layer of the selectors is exactly the same as the tiling of any other third layers. Therefore, all the third layers simulate the same plane tiling with Wang tiles. If the set of $5$ polycubes tile the entire space, then the tiling is always periodic in the vertical direction.
\end{itemize}


\begin{figure}[H]
\begin{center}
\begin{tikzpicture}[scale=0.5]

\foreach \x in {0,8}
\foreach \y in {0}
{
\draw [ fill=gray!20] (\x+0,0+\y)--(\x+0,8+\y)--(\x+8,8+\y)--(\x+8,0+\y)--(\x+0,0+\y);
\draw [ fill=orange!20] (\x+1,1+\y)--(\x+1,7+\y)--(\x+7,7+\y)--(\x+7,1+\y)--(\x+1,1+\y);
\draw [ fill=violet!20] (\x+1,7+\y)--(\x+1,9+\y)--(\x+2,9+\y)--(\x+2,7+\y)--(\x+1,7+\y);
\draw [ fill=violet!20] (\x+3,7+\y)--(\x+3,9+\y)--(\x+2,9+\y)--(\x+2,7+\y)--(\x+3,7+\y);
\draw [ fill=violet!20] (\x+6,7+\y)--(\x+6,9+\y)--(\x+5,9+\y)--(\x+5,7+\y)--(\x+6,7+\y);
\draw [ fill=violet!20] (\x+6,7+\y)--(\x+6,9+\y)--(\x+7,9+\y)--(\x+7,7+\y)--(\x+6,7+\y);
}
\foreach \x in {-4,4,12}
\foreach \y in {-8}
{
\draw [ fill=gray!20] (\x+0,0+\y)--(\x+0,8+\y)--(\x+8,8+\y)--(\x+8,0+\y)--(\x+0,0+\y);
\draw [ fill=orange!20] (\x+1,1+\y)--(\x+1,7+\y)--(\x+7,7+\y)--(\x+7,1+\y)--(\x+1,1+\y);
\draw [ fill=violet!20] (\x+1,7+\y)--(\x+1,9+\y)--(\x+2,9+\y)--(\x+2,7+\y)--(\x+1,7+\y);
\draw [ fill=violet!20] (\x+3,7+\y)--(\x+3,9+\y)--(\x+2,9+\y)--(\x+2,7+\y)--(\x+3,7+\y);
\draw [ fill=violet!20] (\x+6,7+\y)--(\x+6,9+\y)--(\x+5,9+\y)--(\x+5,7+\y)--(\x+6,7+\y);
\draw [ fill=violet!20] (\x+6,7+\y)--(\x+6,9+\y)--(\x+7,9+\y)--(\x+7,7+\y)--(\x+6,7+\y);
}
\foreach \x in {0,8}
\foreach \y in {-16}
{
\draw [ fill=gray!20] (\x+0,0+\y)--(\x+0,8+\y)--(\x+8,8+\y)--(\x+8,0+\y)--(\x+0,0+\y);
\draw [ fill=orange!20] (\x+1,1+\y)--(\x+1,7+\y)--(\x+7,7+\y)--(\x+7,1+\y)--(\x+1,1+\y);
}
\foreach \x in {-4,4,12}
\foreach \y in {-16}
{
\draw [ fill=violet!20] (\x+1,7+\y)--(\x+1,9+\y)--(\x+2,9+\y)--(\x+2,7+\y)--(\x+1,7+\y);
\draw [ fill=violet!20] (\x+3,7+\y)--(\x+3,9+\y)--(\x+2,9+\y)--(\x+2,7+\y)--(\x+3,7+\y);
\draw [ fill=violet!20] (\x+6,7+\y)--(\x+6,9+\y)--(\x+5,9+\y)--(\x+5,7+\y)--(\x+6,7+\y);
\draw [ fill=violet!20] (\x+6,7+\y)--(\x+6,9+\y)--(\x+7,9+\y)--(\x+7,7+\y)--(\x+6,7+\y);
}
\foreach \x in {0,8}
\foreach \y in {-24}
{
\draw [ fill=violet!20] (\x+1,7+\y)--(\x+1,9+\y)--(\x+2,9+\y)--(\x+2,7+\y)--(\x+1,7+\y);
\draw [ fill=violet!20] (\x+3,7+\y)--(\x+3,9+\y)--(\x+2,9+\y)--(\x+2,7+\y)--(\x+3,7+\y);
\draw [ fill=violet!20] (\x+6,7+\y)--(\x+6,9+\y)--(\x+5,9+\y)--(\x+5,7+\y)--(\x+6,7+\y);
\draw [ fill=violet!20] (\x+6,7+\y)--(\x+6,9+\y)--(\x+7,9+\y)--(\x+7,7+\y)--(\x+6,7+\y);
}

\draw [thick] (17.5,-15)--(18.5,-14)--(19.5,-15)--(18.5,-16)--(17.5,-15);
\draw [thick] (21.5,-15)--(20.5,-14)--(19.5,-15)--(20.5,-16)--(21.5,-15);
\draw [thick] (16.5,-14)--(17.5,-13)--(18.5,-14)--(17.5,-15)--(16.5,-14);
\draw [thick] (20.5,-14)--(19.5,-13)--(18.5,-14)--(19.5,-15)--(20.5,-14);
\draw [thick] (20.5,-14)--(21.5,-13)--(22.5,-14)--(21.5,-15)--(20.5,-14);
\draw [thick] (17.5,-13)--(18.5,-14)--(19.5,-13)--(18.5,-12)--(17.5,-13);
\draw [thick] (21.5,-13)--(20.5,-14)--(19.5,-13)--(20.5,-12)--(21.5,-13);

\draw [color=black!20] (16.5,-14)--(22.5,-14);
\draw [color=black!20] (17.5,-15)--(21.5,-15);
\draw [color=black!20] (17.5,-13)--(21.5,-13);

\draw [color=black!20] (18.5,-16)--(18.5,-12);
\draw [color=black!20] (20.5,-16)--(20.5,-12);

\draw [color=black!20] (21.5,-15)--(21.5,-13);
\draw [color=black!20] (19.5,-15)--(19.5,-13);
\draw [color=black!20] (17.5,-15)--(17.5,-13);

\end{tikzpicture}
\end{center}
\caption{The tiling pattern II (thrid layer of selectors).}\label{fig_3d_pattern_2}
\end{figure}
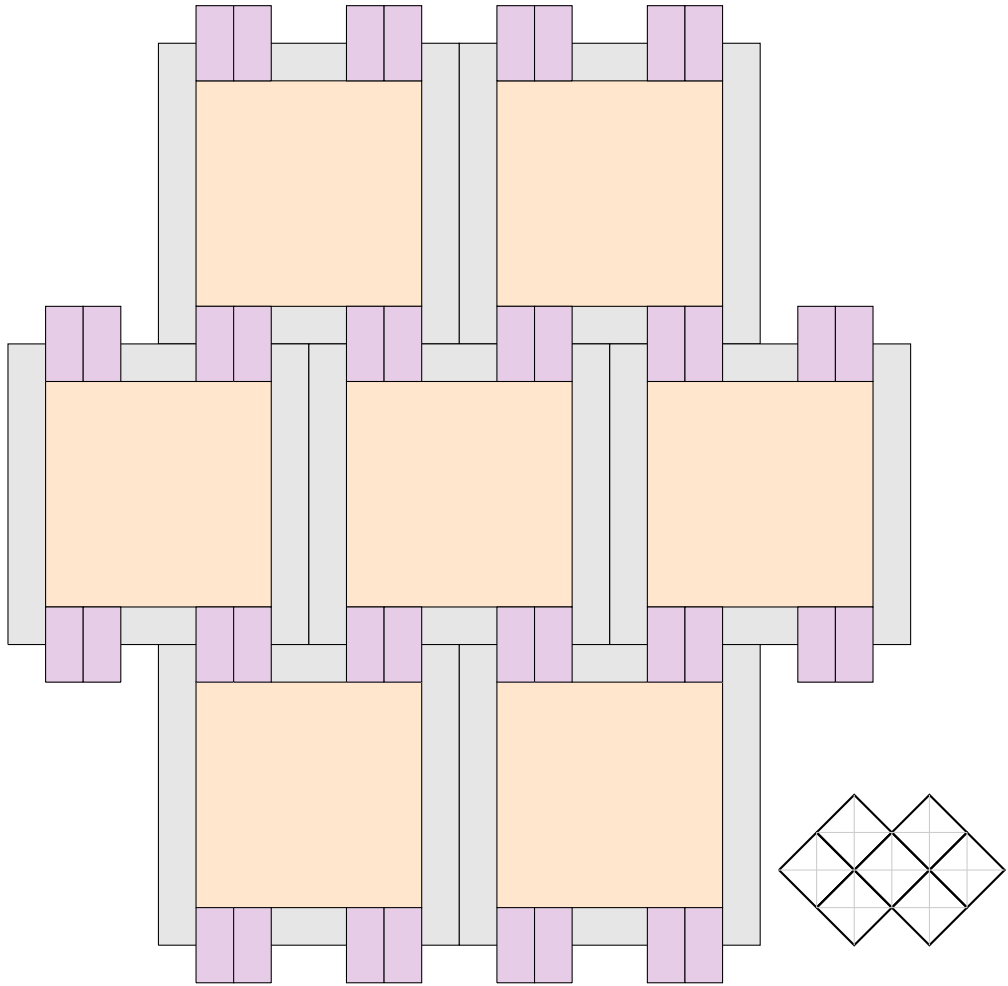

The above arguments have shown that the set of $5$ polycubes can tile the $3$-dimensional space if and only if the corresponding set of Wang tiles can tile the plane. This completes the proof.
\end{proof}

\noindent\textbf{Remark 1.} The reduction method in the proof of Theorem \ref{thm_3d_new} in many aspects is different with Ollinger's reduction framework introduced in \cite{o09}. In our method, the simulated Wang tiles are packed in a polycube (i.e. the encoder) vertically, and the simulated Wang tiles form tilings on the third layers of the selector horizontally. Thus, the dimensions of the space have been fully utilized to achieve a more compact arrangement of the simulated Wang tiles. This more compact pattern is the key to require only one linker polycube for connecting simulated Wang tiles, compared to at least two linkers in Ollinger's original framework.

\section{Undecidability of Tiling $4$-dimensional Space}\label{sec_4d}

We think of the $4$-dimensional Euclidean space as a $3$-dimensional space plus a fourth dimension of time. A $4$-dimensional polyhypercube\footnote{All hypercubes or polyhypercubes considered in this paper are $4$-dimensional.} can be imagined as polycubes evolve in finite steps of unit time frames. As a result, we describe a polyhypercube by consecutive frames of polycubes. 

\subsection{Building Blocks in $4$-dimensional Space}\label{ssec_4d_bb}

A \textit{polyhypercube} is a connected union of a finite number of $1\times 1\times 1\times 1$ unit hypercubes gluing together face-to-face. A $8\times 8\times 8\times 8$ polyhypercube is called a \textit{functional hypercube}. A $8\times 8\times 8\times 4$ polyhypercube is called a \textit{half functional hypercube}. A functional hypercube can be divided into two half functional hypercube: the former half and the latter half. A half functional hypercube consists of $4$ time frames, and each frame is a $3$-dimensional $8\times 8\times 8$ polycube. So a functional hypercube and a half functional hypercube are $3$-dimensional $8\times 8\times 8$ polycubes keeping unchanged over a time interval of length $8$ and $4$, respectively.

Let $K$ be a $3$-dimensional $8\times 8\times 8$ polycube, so $K$ is a set of $512$ unit cubes. Let $T_1$ denote the set of unit cubes on the outer surface of $K$. Let $T_2$ denote the set of unit cubes on the outer surface of $K-T_1$. Let $T_3$ denote the set of unit cubes on the outer surface of $K-T_1-T_2$. Let $T_4=K-T_1-T_2-T_3$. Thus, we have partitioned $K$ into disjoint union of $4$ sets: $T_1$, $T_2$, $T_3$ and $T_4$.

Now, we define the building block $c$ frame by frame. The building block $c$ is a half functional hypercube with a dent in the dimension of time, which consists of $4$ frames. The first frame of $c$ is the polycube $K$, the second frame is $T_1$, the third frame is $T_1\cup T_3$, and the fourth frame is $T_1\cup T_2\cup T_3$. So $c$ is a half functional hypercube with a dent facing the future.

The building block $C$ is a half functional hyper with with a bump facing the past which matches the dent of $c$ exactly. In other words, $C$ consists of $7$ frames (in order): $T_2\cup  T_3\cup T_4$, $T_2\cup T_4$, $T_4$, $K$, $K$, $K$, and $K$, where the first $3$ frames form the bump facing the past. The two building blocks $c$ and $C$ can be put together to form a functional hypercube perfectly without any gaps or overlaps. The building block $C$ alone forms a tile (the filler) in our set of $4$ tiles as we will see soon in Subsection \ref{ssec_4_tiles}.

Note that $T_4$ is a $2\times 2\times 2$ polycube at the very center of $K$. We divide $T_4$ into two parts, the upper half $T^+_4$ and the lower half $T^-_4$. Both $T^+_4$ and $T^-_4$ are $2\times 2\times 1$ polycubes. We can also divide $T_4$ into two halves in other ways. The south half $T^N_4$ and north half $T^S_4$, both of which are $2\times 1\times 2$ polycubes. The east half $T^E_4$ and the west half $E^W_4$, both of which are $1\times 2\times 2$ polycubes.

Both the building blocks $a$ and $b$ are half functional hypercubes with a dent facing the future. Let $a$ be the polyhypercube consist of $4$ frames: $K$, $T_1$, $T_1\cup T_3\cup T^-_4$, $T_1\cup T_2\cup T_3\cup T^-_4$. Let $b$ be the polyhypercube consist of $4$ frames: $K$, $T_1$, $T_1\cup T_3\cup T^+_4$, $T_1\cup T_2\cup T_3\cup T^+_4$. The building blocks $A$ and $B$ are half functional hypercubes with a bump facing the past that matches the dent of $a$ and $b$, respectively. In exact words, building block $A$ consists of $7$ frames: $T_2\cup  T_3\cup T_4$, $T_2\cup T^+_4$, $T^+_4$, $K$, $K$, $K$, and $K$; and $B$ consists of $7$ frames: $T_2\cup  T_3\cup T_4$, $T_2\cup T^-_4$, $T^-_4$, $K$, $K$, $K$, and $K$.

Three more pairs of building blocks, $x$ and $X$, $y$ and $Y$, and $z$ and $Z$, are defined similarly as follows:
\begin{itemize}
    \item $x$: $K$, $T_1$, $T_1\cup T_3\cup T^N_4$, $T_1\cup T_2\cup T_3\cup T^N_4$;
    \item $X$: $T_2\cup  T_3\cup T_4$, $T_2\cup T^S_4$, $T^S_4$, $K$, $K$, $K$, and $K$;
    \item $y$: $K$, $T_1$, $T_1\cup T_3\cup T^S_4$, $T_1\cup T_2\cup T_3\cup T^S_4$;
    \item $Y$: $T_2\cup  T_3\cup T_4$, $T_2\cup T^N_4$, $T^N_4$, $K$, $K$, $K$, and $K$;
    \item $z$: $K$, $T_1$, $T_1\cup T_3\cup T^W_4$, $T_1\cup T_2\cup T_3\cup T^W_4$;
    \item $Z$: $T_2\cup  T_3\cup T_4$, $T_2\cup T^E_4$, $T^E_4$, $K$, $K$, $K$, and $K$.
\end{itemize}

We need two more building blocks $\mathbb{E}$ and $\mathbb{S}$, which are $8\times 8\times 8\times 8$ functional hypercubes with both a dent and a bump. Recall that $T_4$ is a polycube consisting of $8$ unit cubes. Let $J$ be the unit cube at the top northeast corner. Both $\mathbb{E}$ and $\mathbb{S}$ consist of $11$ frames as defined below: 
\begin{itemize}
    \item $\mathbb{E}$: $T_2\cup  T_3\cup T_4$, $T_2\cup J$, $J$, $K$, $K$, $K$, $K$, $K$, $T_1$, $T_1\cup T_3\cup (T_4-J)$, $T_1\cup T_2\cup T_3\cup (T_4-J)$;
    \item $\mathbb{S}$: $T_2\cup  T_3\cup T_4$, $T_2\cup (T_4-J)$, $T_4-J$, $K$, $K$, $K$, $K$, $K$, $T_1$, $T_1\cup T_3\cup J$, $T_1\cup T_2\cup T_3\cup J$.
\end{itemize}

\subsection{Lifting the Tiles and Tilings}

In this subsection, we sketch the general idea of lifting tiles and tilings of $3$-dimensional space to tiles and tilings of $4$-dimensional space. This technique has been used in \cite{yz24b} to prove the undecidability of tiling the $3$-dimensional space with a set of $6$ polycubes, by lifting from $2$-dimensional tiling with a set of $8$ polyominoes. As it turns out in the subsequent subsections, this technique can be generalized to prove the main result of the current paper. The lifting technique contains the following interconnected ingredients.
\begin{itemize}
    \item Cut the $4$-dimensional spacetime into two-way infinite slices with respect to the fourth dimension (i.e. the time). Each slice is the $3$-dimensional space that evolves over a finite time interval of length $8$ units. In other words, a slice consists of $8$ frames, and each frame is the entire $3$-dimensional space. The slices are put one after another extending infinitely to the past and the future, therefore forming the $4$-dimensional spacetime. Each slice of spacetime is treated as a thick version of the $3$-dimensional space, and will be tiled almost the same way we tile the $3$-dimensional space in Section \ref{sec_3d}.
    \item The $4$-dimensional tile set are also lifted from the $3$-dimensional polycubes we introduced in Section \ref{sec_3d} accordingly. Besides thickening each polycube in the $4$-th dimension (i.e. the time dimension) when lifting, another important modification is the shape of the dents and bumps. All the dents and bumps are lifted to the $4$-th dimension, as we have seen in the $4$-dimensional building blocks in the previous subsection.
    \item The $4$-dimensional linker (which will be introduced in the next subsection) in our tile set can be misaligned (with respect to the time slice) when tiling the $4$-dimensional spacetime. In other words, it can be placed essentially across two adjacent slices. This is a very useful feature that plays an important role in decreasing the number of fillers from two in the $3$-dimensional tile set to just one in the $4$-dimensional tile set.
\end{itemize}

Note that the above is a high-level oversimplified description of the lifting technique. The details will be explained in the next two subsections. 

\subsection{The Set of $4$ Polyhypercubes}\label{ssec_4_tiles}

The $3$-dimensional projections of \textit{encoder}, \textit{selector}, \textit{linker} are illustrated by layer diagram in Figure \ref{fig_encoder_4d}, Figure \ref{fig_selector_4d} and Figure \ref{fig_linker_4d}, respectively. As these figures are $3$-dimensional projections of $4$-dimensional tiles, keep in mind that each square represents a $4$-dimensional building block. The gray squares without a label in these figures are normal $8\times 8\times 8\times 8$ functional hypercube. The gray squares with a label are building blocks we have introduced in Subsection~\ref{ssec_4d_bb}. Note that the main part (i.e. the functional hypercubes) consists of $8$ frames, but the other labeled building blocks may have more than or less than $8$ frames. So we have to specify  how the labeled building blocks are attached to the main part with respect to the time frames. A building block with a subscript $*$ is attached to the former half of the time frames (i.e. from the first frame to the fourth frame). A building block with a superscript $*$ is attached to the latter half of the time frames (i.e. from the fifth frame to the eighth frame). The building blocks $a$, $b$, $c$, $x$, $y$ and $z$ have exactly $4$ frames, so they are attached to either the first four frames or the last four frames of the main part. The building blocks $A$, $B$, $C$, $X$, $Y$, and $Z$ have $7$ frames, among which four frames form a half functional hypercube, and the rest three frames form a bump. We attach the half functional hypercube to the former half or the latter half of the main part. Similarly, the building blocks $\mathbb{E}$ and $\mathbb{S}$ consist of a functional hypercube (of $8$ frames, including the dent) and a bump (of $3$ frames). We attach the functional hypercube to the $8$ frames of the main part.


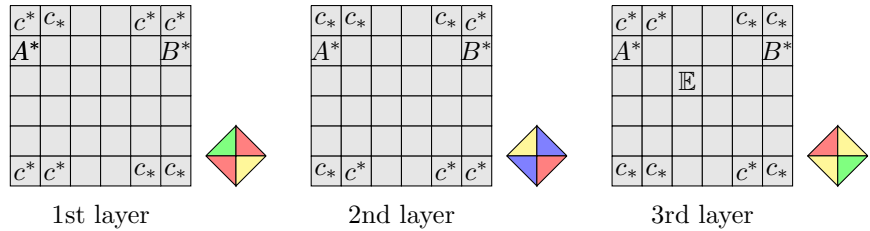
\begin{figure}[H]
\begin{center}
\begin{tikzpicture}[scale=0.4]

\foreach \x in {0,...,5}
\foreach \y in {0,...,5} 
{
\draw [ fill=gray!20] (\x+0,1+\y)--(\x+1,1+\y)--(\x+1,0+\y)--(\x+0,0+\y)--(\x+0,1+\y);
}

\foreach \x in {10,...,15}
\foreach \y in {0,...,5} 
{
\draw [ fill=gray!20] (\x+0,1+\y)--(\x+1,1+\y)--(\x+1,0+\y)--(\x+0,0+\y)--(\x+0,1+\y);
}

\foreach \x in {20,...,25}
\foreach \y in {0,...,5} 
{
\draw [ fill=gray!20] (\x+0,1+\y)--(\x+1,1+\y)--(\x+1,0+\y)--(\x+0,0+\y)--(\x+0,1+\y);
}

\foreach \x in {0,4,5,15,20,21}
\foreach \y in {5}  
{
\node at (\x+0.5,\y+0.5) {$c^*$};
}
\foreach \x in {1,10,11,14,24,25}
\foreach \y in {5}  
{
\node at (\x+0.5,\y+0.5) {$c_*$};
}

\foreach \x in {0,1,11,14,15,24}
\foreach \y in {0} 
{
\node at (\x+0.5,\y+0.5) {$c^*$};
}
\foreach \x in {4,5,10,20,21,25}
\foreach \y in {0} 
{
\node at (\x+0.5,\y+0.5) {$c_*$};
}

\foreach \x in {0,10,,20}
\foreach \y in {4} 
{
\node at (\x+0.5,\y+0.5) {$A^*$};
}
\foreach \x in {5,15,25}
\foreach \y in {4} 
{
\node at (\x+0.5,\y+0.5) {$B^*$};
}


\foreach \x in {22}
\foreach \y in {3} 
{
\node at (\x+0.5,\y+0.5) {$\mathbb{E}$};
}

\node at (3,-1) {$1$st layer};  \node at (13,-1) {$2$nd layer};  \node at (23,-1) {$3$rd layer};

\draw [fill=green!50] (7-0.5,1)--(8-0.5,1)--(8-0.5,2)--(7-0.5,1);
\draw [fill=red!50] (9-0.5,1)--(8-0.5,1)--(8-0.5,2)--(9-0.5,1);
\draw [fill=red!50] (7-0.5,1)--(8-0.5,1)--(8-0.5,0)--(7-0.5,1);
\draw [fill=yellow!50] (9-0.5,1)--(8-0.5,1)--(8-0.5,0)--(9-0.5,1);

\draw [fill=yellow!50] (17-0.5,1)--(18-0.5,1)--(18-0.5,2)--(17-0.5,1);
\draw [fill=blue!50] (19-0.5,1)--(18-0.5,1)--(18-0.5,2)--(19-0.5,1);
\draw [fill=blue!50] (17-0.5,1)--(18-0.5,1)--(18-0.5,0)--(17-0.5,1);
\draw [fill=red!50] (19-0.5,1)--(18-0.5,1)--(18-0.5,0)--(19-0.5,1);

\draw [fill=red!50] (27-0.5,1)--(28-0.5,1)--(28-0.5,2)--(27-0.5,1);
\draw [fill=yellow!50] (29-0.5,1)--(28-0.5,1)--(28-0.5,2)--(29-0.5,1);
\draw [fill=yellow!50] (27-0.5,1)--(28-0.5,1)--(28-0.5,0)--(27-0.5,1);
\draw [fill=green!50] (29-0.5,1)--(28-0.5,1)--(28-0.5,0)--(29-0.5,1);

\end{tikzpicture}
\end{center}
\caption{$3$-dimensional projection of the encoder.}\label{fig_encoder_4d}
\end{figure}


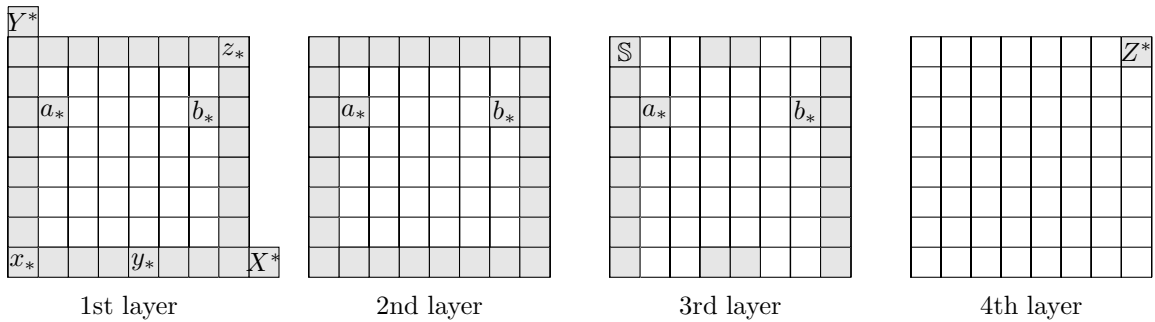
\begin{figure}[H]
\begin{center}
\begin{tikzpicture}[scale=0.4]

\foreach \x in {0,10,20,30}
\foreach \y in {0,...,8} 
{ 
\draw   (\x+0,0+\y)--(\x+8,0+\y);
\draw  (\x,0)--(\x,8);
\draw   (\x+8,0)--(\x+8,8);
\draw   (\x+7,0)--(\x+7,8);
\draw   (\x+6,0)--(\x+6,8);
\draw   (\x+5,0)--(\x+5,8);
\draw   (\x+4,0)--(\x+4,8);
\draw   (\x+3,0)--(\x+3,8);
\draw   (\x+2,0)--(\x+2,8);
\draw   (\x+1,0)--(\x+1,8);
}

\foreach \x in {0,...,7}
\foreach \y in {0,7} 
{
\draw [ fill=gray!20] (\x+0,1+\y)--(\x+1,1+\y)--(\x+1,0+\y)--(\x+0,0+\y)--(\x+0,1+\y);
}
\foreach \x in {8}
\foreach \y in {0} 
{
\draw [ fill=gray!20] (\x+0,1+\y)--(\x+1,1+\y)--(\x+1,0+\y)--(\x+0,0+\y)--(\x+0,1+\y);
}
\foreach \x in {0,7}
\foreach \y in {0,...,7} 
{
\draw [ fill=gray!20] (\x+0,1+\y)--(\x+1,1+\y)--(\x+1,0+\y)--(\x+0,0+\y)--(\x+0,1+\y);
}
\foreach \x in {0}
\foreach \y in {8} 
{
\draw [ fill=gray!20] (\x+0,1+\y)--(\x+1,1+\y)--(\x+1,0+\y)--(\x+0,0+\y)--(\x+0,1+\y);
}
\foreach \x in {1,6}
\foreach \y in {5} 
{
\draw [ fill=gray!20] (\x+0,1+\y)--(\x+1,1+\y)--(\x+1,0+\y)--(\x+0,0+\y)--(\x+0,1+\y);
}

\foreach \x in {10,17}
\foreach \y in {0,...,7} 
{
\draw [ fill=gray!20] (\x+0,1+\y)--(\x+1,1+\y)--(\x+1,0+\y)--(\x+0,0+\y)--(\x+0,1+\y);
}
\foreach \x in {11,...,16}
\foreach \y in {0,7} 
{
\draw [ fill=gray!20] (\x+0,1+\y)--(\x+1,1+\y)--(\x+1,0+\y)--(\x+0,0+\y)--(\x+0,1+\y);
}
\foreach \x in {11,16}
\foreach \y in {5} 
{
\draw [ fill=gray!20] (\x+0,1+\y)--(\x+1,1+\y)--(\x+1,0+\y)--(\x+0,0+\y)--(\x+0,1+\y);
}

\foreach \x in {20,27}
\foreach \y in {0,...,7} 
{
\draw [ fill=gray!20] (\x+0,1+\y)--(\x+1,1+\y)--(\x+1,0+\y)--(\x+0,0+\y)--(\x+0,1+\y);
}
\foreach \x in {23,24}
\foreach \y in {0,7} 
{
\draw [ fill=gray!20] (\x+0,1+\y)--(\x+1,1+\y)--(\x+1,0+\y)--(\x+0,0+\y)--(\x+0,1+\y);
}

\foreach \x in {21,26}
\foreach \y in {5} 
{
\draw [ fill=gray!20] (\x+0,1+\y)--(\x+1,1+\y)--(\x+1,0+\y)--(\x+0,0+\y)--(\x+0,1+\y);
}

\foreach \x in {37}
\foreach \y in {7} 
{
\draw [ fill=gray!20] (\x+0,1+\y)--(\x+1,1+\y)--(\x+1,0+\y)--(\x+0,0+\y)--(\x+0,1+\y);
}

\foreach \x in {8}
\foreach \y in {0} 
{
\node at (\x+0.5,\y+0.5) {$X^*$};
}
\foreach \x in {0}
\foreach \y in {0} 
{
\node at (\x+0.5,\y+0.5) {$x_*$};
}

\foreach \x in {0}
\foreach \y in {8} 
{
\node at (\x+0.5,\y+0.5) {$Y^*$};
}
\foreach \x in {4}
\foreach \y in {0} 
{
\node at (\x+0.5,\y+0.5) {$y_*$};
}

\foreach \x in {7}
\foreach \y in {7} 
{
\node at (\x+0.5,\y+0.5) {$z_*$};
}
\foreach \x in {37}
\foreach \y in {7} 
{
\node at (\x+0.5,\y+0.5) {$Z^*$};
}

\foreach \x in {1,11,21}
\foreach \y in {5} 
{
\node at (\x+0.5,\y+0.5) {$a_*$};
}
\foreach \x in {6,16,26}
\foreach \y in {5} 
{
\node at (\x+0.5,\y+0.5) {$b_*$};
}


\foreach \x in {20}
\foreach \y in {7} 
{
\node at (\x+0.5,\y+0.5) {$\mathbb{S}$};
}

\node at (4,-1) {$1$st layer};  \node at (14,-1) {$2$nd layer};  \node at (24,-1) {$3$rd layer};   \node at (34,-1) {$4$th layer}; 


\end{tikzpicture}
\end{center}
\caption{$3$-dimensional projection the selector.}\label{fig_selector_4d}
\end{figure}


\begin{figure}[H]
\begin{center}
\begin{tikzpicture}[scale=0.4]

\foreach \x in {0}
\foreach \y in {0,1,2,3} 
{
\draw [ fill=gray!20] (\x+0,1+\y)--(\x+1,1+\y)--(\x+1,0+\y)--(\x+0,0+\y)--(\x+0,1+\y);
}

\foreach \x in {0}
\foreach \y in {0,3} 
{
\node at (\x+0.5,\y+0.5) {$C^*$};
}


\end{tikzpicture}
\end{center}
\caption{$3$-dimensional projection of linker.}\label{fig_linker_4d}
\end{figure}
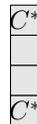

As we have mentioned in the previous subsection, the three tiles (encoder, selector, and filler) are lifted from their respective $3$-dimensional counterparts introduced in Section \ref{sec_3d}. They are thick (in the time dimension) versions of their $3$-dimensional counterparts with all the bumps and dents lifted to the fourth dimension. For the encoder (see Figure \ref{fig_encoder_4d}), the colors of simulated Wang tiles are now encoded by the way the building blocks are attached to the main part (i.e. attached to the former half or latter half). The colors red, green, blue and yellow are encoded by $c^*c^*$, $c^*c_*$, $c_*c^*$, and $c_*c_*$, respectively. For the linker (see Figure \ref{fig_linker_4d}), the two building blocks $C$ are both attached to the latter half of the main part. Therefore, the linkers can only connect two building blocks $c$ that are aligned with time.

Finally, a \textit{filler} is just a building block $C$. Thus we have constructed a set of $4$ polyhypercubes: an encoder, a selector, a linker, and a filler. It is straightforward to check they are all connected in $4$-dimensional space.

\subsection{Proof of Theorem \ref{thm_main}}

\begin{proof}[Proof of Theorem \ref{thm_main}]
We prove the theorem also by reducing Wang's domino problem. For each set $W$ of Wang tiles, we construct a set $P$ of $4$ polyhypercubes such that $W$ can tile the plane if and only if $P$ can tile the $4$-dimensional space. The $4$ polyhypercubes have already been described in the previous subsections. They are an encoder, a selector, a linker and a filler. We will show that to tile the $4$-dimensional spacetime with translated copies of these $4$ kinds of tiles, each slice must also follow the patterns in Figure \ref{fig_3d_pattern} and Figure \ref{fig_3d_pattern_2} (but think of them as $3$-dimesional projection of a slice of $4$-dimensional space).
\begin{itemize}
    \item With a similar argument as the $3$-dimensional case, the selector must be used in any tiling of the $4$-dimensional space. If the encoder is used, then the selector must be used as the only building blocks to match $A^*$ and $B^*$ are the building blocks $a_*$ and $b_*$ in the selector. If the linker or the filler are used, then the encoder must to used in order to match the building blocks $C$ in the linker or the filler. In all cases, the selector must be used.
    \item Within each slice, the spatial relation between the selectors is determined by the $3$ pair of building blocks $x_*$ and $X^*$, $y_*$ and $Y^*$, and $z_*$ and $Z^*$. The selectors form the same lattice lifted from the $3$-dimensional counterpart in Section \ref{sec_3d}. The building blocks $z_*$ and $Z^*$ determine that the selectors must form two-way infinite wells in the vertical direction. The building blocks $x_*$, $X^*$, $y_*$ and $Y^*$ determine that the selectors must form a lattice in every horizontal layer (see Figure \ref{fig_3d_pattern} and Figure \ref{fig_3d_pattern_2}). Note that the selectors are almost perfectly aligned within each slice ($8$ frames) except for the building block $\mathbb{S}$ with a bump interlocking with the previous slice (to be more precise, interlock with the last $3$ frames of the previous slice).
    \item The building block $\mathbb{S}$ determines that the overall structures of the selectors of the next slice and the previous slice must be exactly the same as the current slice.
    \item To match the building blocks $a_*$ and $b_*$, encoders must be placed inside every vertical well of selectors in every slice. Just like the $3$-dimensional case in the previous section, different wells of selectors in a slice can independently select one simulated Wang tile of the encoder to be aligned with the third layer of the selector. Note also that the building blocks $a_*$, $b_*$, $A^*$ and $B^*$ ensure that the encoders are aligned (in time) with the selectors within each slice except for the building blocks $\mathbb{E}$.
    \item The building block $\mathbb{E}$ determines that the overall structures of the encoders of the next slice and the previous slice must be exactly the same as the current slice. So the overall tiling structures of selectors and encoders are the same for every slice. Thus the tilings of $3$-dimensional space of Section \ref{sec_3d} have been lifted to $4$-dimensional space by repeating infinitely in the direction of time.
    \item Finally, there are some gaps between the selectors and encoders to be filled by the fillers or linkers. There are two kinds of gaps. The first kind is the smaller gaps between the building blocks $c^*$ or $c_*$ of the encoders and the first and second layers of selectors in every slice. These smaller isolated $4$-dimensional vacant holes can be filled exactly by the filler polyhypercubes (i.e. building blocks $C$). The second kind is the bigger gaps between the two adjacent encoders aligned with the third layer of selectors. For any pair of adjacent encoders (through the windows of the third layer of selectors), the gap between them is a two-way infinite time tunnel (i.e. the vacant space in one slice extends infinitely to the past and the future, see Figure \ref{fig_time_tunnel} for conceptual illustration). After being lifted from $3$-dimensional space, these otherwise finite isolated holes (in $3$-dimensional space) have also been lifted to infinite regions of $4$-dimensional space. These infinite time tunnels can be filled exactly if and only if each pair of encoding building blocks (of the two adjacent encoders) are either both attached to the former half of the slice, or both attached to the latter half of the slice. To connect two building blocks $c_*$, the linker is aligned with a slice. To connect two building blocks $c^*$, the linker is not aligned with the slices. This is equivalent to that any pair of adjacent edges of the simulated Wang tiles on the third layer have the same color.
\end{itemize}


\begin{figure}[H]
\begin{center}
\begin{tikzpicture}[scale=1]

\foreach \x in {0}
\foreach \y in {0} 
{
\draw [->,very thick] (\x+0,0+\y)--(\x+7,0+\y);
\draw [->,very thick] (\x+0,0+\y)--(\x+5,4+\y);
\draw [->,very thick] (\x+0,0+\y)--(\x+0,6+\y);

\draw (\x+4,\y+1)--(\x+5,\y+1)--(\x+5,\y+2)--(\x+3,\y+2)--(\x+3,\y+1)--(\x+4,\y+1)--(\x+4,\y+2)--(\x+6.5,\y+4)--(\x+7.5,\y+4)--(\x+7.5,\y+3)--(\x+5,\y+1);
\draw (\x+7.5,\y+4)--(\x+5,\y+2);
\draw (\x+6.5,\y+4)--(\x+5.5,\y+4)--(\x+3,\y+2);
\draw (\x+6.25,\y+2)--(\x+6.25,\y+3)--(\x+4.25,\y+3);
\draw (\x+6.875,\y+2.5)--(\x+6.875,\y+3.5)--(\x+4.875,\y+3.5);
\draw (\x+5.625,\y+1.5)--(\x+5.625,\y+2.5)--(\x+3.625,\y+2.5);
}

\node at (7.2,0) {$x$}; \node at (0,6.2) {$z$}; \node at (5.2,4.2) {$y$};
\node at (3.7,2.2) {$C^*$}; \node at (4.7,2.2) {$C_*$};
\node at (5.7,3.7) {$C^*$}; \node at (6.7,3.7) {$C_*$};
 
\foreach \x in {8}
\foreach \y in {4} 
{
\draw [->,very thick] (\x+0,0+\y)--(\x+7,0+\y);
\draw [->,very thick] (\x+0,0+\y)--(\x+5,4+\y);
\draw [->,very thick] (\x+0,0+\y)--(\x+0,6+\y);

\draw (\x+4,\y+1)--(\x+5,\y+1)--(\x+5,\y+2)--(\x+3,\y+2)--(\x+3,\y+1)--(\x+4,\y+1)--(\x+4,\y+2)--(\x+6.5,\y+4)--(\x+7.5,\y+4)--(\x+7.5,\y+3)--(\x+5,\y+1);
\draw (\x+7.5,\y+4)--(\x+5,\y+2);
\draw (\x+6.5,\y+4)--(\x+5.5,\y+4)--(\x+3,\y+2);
\draw (\x+6.25,\y+2)--(\x+6.25,\y+3)--(\x+4.25,\y+3);
\draw (\x+6.875,\y+2.5)--(\x+6.875,\y+3.5)--(\x+4.875,\y+3.5);
\draw (\x+5.625,\y+1.5)--(\x+5.625,\y+2.5)--(\x+3.625,\y+2.5);

\node at (\x+3.7,\y+2.2) {$C^*$}; \node at (\x+4.7,\y+2.2) {$C_*$};
\node at (\x+5.7,\y+3.7) {$C^*$}; \node at (\x+6.7,\y+3.7) {$C_*$};
}

\node at (15.2,4) {$x$}; \node at (8,10.2) {$z$}; \node at (13.2,8.2) {$y$};

\draw [color=red, very thick,->] (1,-1)--(5,1)--(13,5)--(15,6);
\node at (15.2,6.2) {$t$};


\end{tikzpicture}
\end{center}
\caption{A time tunnel.}\label{fig_time_tunnel}
\end{figure}
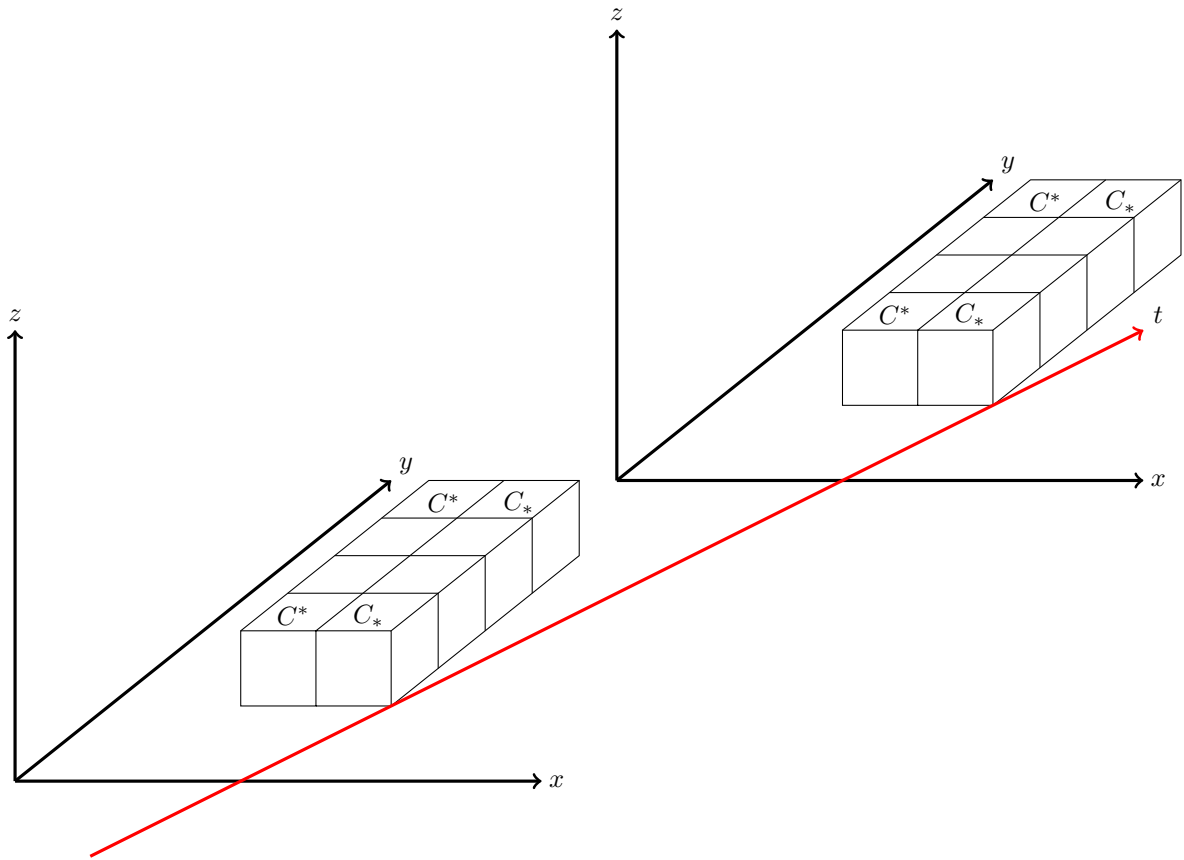

The tiling of the time tunnel with linkers in the last step in the above argument is the most crucial one. It means that the $4$ polyhypercubes can tile the $4$-dimensional space if and only if the set of Wang tiles being simulated can tile the plane. This completes the proof. 
\end{proof}

\section{Conclusion}\label{sec_conclu}
We have shown that the translational tiling problem of $\mathbb{Z}^n$ with a set of $k$ tiles is undecidable for $(k,n)=(4,4)$. The next step is to show the undecidability of tiling $\mathbb{Z}^n$ with a set of $k (\leq 3)$ tiles for some fixed dimension $n$. Ultimately, is it undecidable to tile $\mathbb{Z}^n$ with a single tile for some fixed $n$?

Both undecidability results (Theorem \ref{thm_main} and Theorem \ref{thm_3d_new}) of this paper are obtained for connected tiles (polycubes or polyhypercubes). For the more general case where tiles can be disconnected, is it possible to get the undecidability result with fewer tiles for translational tiling of $\mathbb{Z}^n$ with fixed $n$?

\section*{Acknowledgements}
The authors would like to thank Tal Kagalovsky for pointing out a flaw in the construction of the bigger filler $F^+$ in the first version of this paper. The first author was supported by the Research Fund of Guangdong University of Foreign Studies (Nos. 297-ZW200011 and 297-ZW230018), and the National Natural Science Foundation of China (No. 61976104).



\end{document}